\documentclass[10pt,leqno]{amsart}
%{article}
%{book}
%\usepackage[active]{srcltx}
\usepackage{amsmath, latexsym,  amssymb, amscd}
\usepackage[all]{xy}
\usepackage[british]{babel}
\usepackage{url}

\setlength{\parindent}{0.3cm}
\usepackage{enumitem}

\usepackage{hyperref}

\DeclareMathOperator{\codim}{codim}

\newcommand{\abs}[1]{\left| #1 \right|}

\newcommand{\torsione}{\zeta}

\newcommand{\Ci}{\mathcal{C}}

\newcommand{\gum}{\mathbb{G}}

\newcommand{\sotto}{B}

\newcommand{\cV}{(h(V)+\deg V)[k_\mathrm{tor}(V):k_\mathrm{tor}]}
\newcommand{\cC}{(h( \Ci)+\deg  \Ci)[k_\mathrm{tor}( \Ci):k_\mathrm{tor}]}

\newcommand{\cod}{\mathrm{codim} \,}

\newcommand{\qe}{\mathbb{Q}}

\renewcommand{\epsilon}{\varepsilon}

\newtheorem{thm}{Theorem}[section]

\newtheorem{con}[thm]{Conjecture}

\newtheorem{D}[thm]{Definition}

\newtheorem{remark}[thm]{Remark}

%\pagestyle{fancy}
%\renewcommand{\labelenumi}{\roman{enumi}.}

%\newcounter{firstbib} %Serve per avere la numerazione progressiva tra bibliografie diverse

%\newcommand{\pu}{{\mathbb P}}

%\newcommand{\qe}{\mathbb{Q}}

%\newcommand{\cod}{\rm codim\,\, }
%\newcommand{\gum}{\mathbb{G}_m}

\newcommand{\ambiente}{G}

\newtheorem*{TAC}{Torsion Anomalous Conjecture (TAC)}
\newtheorem*{CIT}{Conjecture on Intersection with Torsion Varieties (CIT)}

\renewcommand{\P}{\mathbb{P}}
\newcommand{\Z}{\mathbb{Z}}

\newcommand{\Q}{\mathbb{Q}}

\title{Explicit height bounds and the effective Mordell-Lang Conjecture}
\author[{ Evelina Viada}]{  
 }

\begin{document}

\keywords{Heights, Rational Points,  Anomalous Intersections, Mordell-Lang Conjecture}
\subjclass[2010]{Primary 11G50, 11G05, Secondary 14G05}

\begin{abstract}
We first give an overview of  some  landmark theorems and recent conjectures in Diophantine Geometry.  In the elliptic case, we  prove some new bounds for  torsion anomalous points and we clarify the implications of several height bounds on the effective Mordell-Lang Conjecture.   In addition, we explicitly  bound the N\'eron-Tate height of the rational points of a new  family of curves of increasing genus, proving the effective Mordell Conjecture for these curves.
\end{abstract}
\maketitle

{\centerline{Evelina Viada    \footnote{Viada's work suported by the FNS Fonds National Suisse.}}}

\section*{Structure of the paper}
In the first chapter we recall some landmark theorems of the last century, such as the Mordell-Weil Theorem, the Manin-Mumford Conjecture, the Mordell-Lang Conjecture and the Bogomolov  Conjecture. In section 2, we introduce the definitions of anomalous and torsion anomalous varieties and a general open conjecture: the Torsion Anomalous Conjecture. In section 3, we discuss  effectivity aspects, in particular we present conjectures and results on the height of  torsion anomalous varieties. In section 4, we explain how these bounds imply some cases of the effective Mordell-Lang Conjecture, which is one of the challenge of this century. As a further  application, in section 5  we {\bf explicitly}  bound the N\'eron-Tate height of the rational points of a new  family of curves of increasing genus, proving the effective Mordell Conjecture for these curves. In section 6, we prove new bounds for the torsion anomalous points  on a curve and we explain the implications of these bounds on the quantitative and effective Mordell-Lang Conjecture.

\section{Setting and Classical Results}
 By  a variety $X$ over a number field $k$ we mean a subset of an affine or projective space, defined by a set of polynomial equations with coefficients in $k$.  The $k$-rational points of $X$ are the solutions with coordinates in $k$, denoted $X(k)$. We denote by $G$  a torus or an abelian variety over the algebraic numbers and we identify $G$ with $G(\overline{\qe})$.   A torus $\gum^n$  is the affine algebraic group $(\overline{\qe}^*)^n$ endowed with coordinatewise multiplication. In $\gum^n$ we consider  the Weil logarithmic height. By an abelian variety $A$, we mean an irreducible  group variety over the algebraic numbers embedded in some projective space. On $A$  we consider the canonical  N\'eron-Tate logarithmic height. We denote by $\hat{h}$ the described height function. We consider on $G$ the Zariski topology. The torsion points of $G$, denoted by ${\rm Tor}_G$,   form a dense subset of  $G$ and are defined over $\overline{\qe}$. By a  generalisation of  Kronecker's theorem, ${\rm Tor}_G$ is exactly the set of points of height zero.  We denote by $k_{\rm tor}$ the field of definition of  ${\rm Tor}_G$.\\

Let $V$ be a subvariety of $G$ defined over the algebraic numbers.

 A central question in Diophantine Geometry is to describe the set of points on $V$ that satisfy some natural arithmetic property. The general method to study such a problem is to compare the arithmetic and the geometry of the variety.
 The  leading principle is that to a geometric assumption on $V$ corresponds the non-density of subsets of $V$ defined via arithmetic properties.

In recent years several authors have obtained crucial results in this context, which in turn triggered a host of quite general questions concerning subvarieties embedded in general group varieties. \\

 A  simple but illustrative example is the case of an irreducible curve $ \Ci$ in a torus $\gum^n$. Inside the torus we can consider natural subsets defined via the group law: the torsion subgroup   ${\rm Tor}_{\gum^n}$, any subgroup $\Gamma$ of finite rank and the union $\mathcal{B}_r$ of all algebraic subgroups of codimension $\ge r$. A natural question is to ask when the intersection of the curve with one of these sets is finite. Clearly, if the curve is a component of an algebraic subgroup none of these intersections is finite, however, excluding the trivial cases, it turns out that the intersection of $ \Ci$ with  ${\rm Tor}_{\gum^n}$, $\Gamma$ and $\mathcal{B}_2$ is finite. These are very special cases of the Manin-Mumford Conjecture, Mordell-Lang Conjecture and of the Torsion Anomalous Conjecture, discussed below.\\

 Geometrically a variety is characterized by its `distance' from being an algebraic subgroup.  More precisely:
\begin{D} A variety $V\subset G$ is  {\em a torsion variety (}resp. {\em a translate)} if it is a finite union of translates of algebraic subgroups of $G$ by torsion points (resp. by points).

 A proper irreducible  subvariety $V\subsetneq G$  is  {\em  weak-transverse (}resp. {\em transverse)} if it is not contained in any proper torsion variety (resp. any proper translate).\end{D}
Clearly transverse implies weak-transverse and  non translate, which imply non-torsion.

Arithmetically one considers subsets of $V$ defined via the group structure or via the Weil or N\'eron-Tate height function, for instance torsion points, subgroups of finite rank or points of small height. For a subvariety $V\subset G\subset \mathbb{P}^m$  we {consider} the  normalised height of $V$, denoted $h(V)$, {defined}  in terms of the Chow form of the ideal of $V$, as done  by P. Philippon in \cite{patriceI} and \cite{patrice}. \\

We now recall some  of the classical landmark theorems proven in the last century in this context.
\begin{thm}{\rm [Manin-Mumford Conjecture]}
\label{MM}~ A subvariety $V$ of $G$ is a torsion variety if and only if the set $V\cap {\rm Tor}_\ambiente$ is Zariski dense in $V$. 
\end{thm}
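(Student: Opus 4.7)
The plan is to treat the two directions separately, as they are of very different depth. For the direct implication (torsion variety $\Rightarrow$ dense torsion), write $V = \bigcup_{i=1}^{r} (\zeta_i + H_i)$ with $\zeta_i \in {\rm Tor}_{G}$ and $H_i \subset G$ connected algebraic subgroups. Every such $H_i$ has Zariski-dense torsion: for a subtorus of $\gum^n$ this follows from Kronecker, and for an abelian subvariety it follows from $\# H_i[N] = N^{2 \dim H_i}$ together with the fact that the countable union $\bigcup_N H_i[N]$ cannot be contained in a proper closed subvariety of $H_i$. Since translation by $\zeta_i \in {\rm Tor}_{G}$ preserves ${\rm Tor}_{G}$, each $(\zeta_i + H_i) \cap {\rm Tor}_{G}$ is dense in $\zeta_i + H_i$, and a finite union of Zariski-dense subsets is Zariski-dense in the union.

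For the converse, I would run the Pila--Zannier strategy uniformly for tori and abelian varieties. Fix a complex uniformization $\pi : \widetilde G \to G$ (the coordinatewise exponential $\co^n \to \gum^n$ in the toric case, the analytic universal cover $\co^g \to A$ in the abelian case), let $\Lambda = \ker \pi$, and fix a fundamental parallelotope $\mathcal{F} \subset \widetilde G$ for $\Lambda$. The analytic set $Z := \pi^{-1}(V) \cap \mathcal{F}$ is definable in the o-minimal structure $\re_{\mathrm{an},\exp}$, and any torsion point of $G$ of order dividing $N$ lifts to a point of $\mathcal{F}$ whose coordinates with respect to a basis of $\Lambda$ are rational with denominator $N$, hence of height $\le N$ in the sense of Pila--Wilkie.

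Assuming now that $V \cap {\rm Tor}_{G}$ is Zariski-dense in $V$, the plan is to combine the o-minimal counting with a lower bound on Galois orbits of torsion points: the trivial bound $\varphi(N) \gg_\epsilon N^{1-\epsilon}$ in the toric case, and Masser's theorem in the abelian case, giving $\gg N^\delta$ Galois conjugates of any order-$N$ torsion point lying on $V$. Each conjugate also lies on $V$, so it lifts to $\gg N^\delta$ rational points of height $\le N$ on $Z$. The Pila--Wilkie theorem bounds the contribution of the transcendental part $Z^{\mathrm{trans}}$ by $O_\epsilon(N^\epsilon)$, hence for $N$ sufficiently large a positive proportion of these lifts must lie in the semialgebraic part $Z^{\mathrm{alg}}$, whose $\pi$-image consists of translates of positive-dimensional algebraic subgroups contained in $V$. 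A standard induction on $\dim V$, using Zariski density of the torsion, then forces $V$ to be a finite union of such translates, i.e.\ a torsion variety.

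The principal technical obstacle is the Galois lower bound on torsion orbits in the abelian case: Masser's theorem rests on isogeny estimates and is the source of most of the depth of this direction. Once it is granted, the o-minimal step and the extraction of the algebraic part follow the standard Pila--Zannier template, and the inductive descent on $\dim V$ is straightforward because every positive-dimensional torsion translate found inside $V$ is itself of the target form.
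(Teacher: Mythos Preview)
The paper does not actually prove this statement: it records the Manin--Mumford Conjecture as a classical theorem and cites the original proofs of Raynaud (abelian varieties) and Laurent (tori), with no argument given. Your proposal instead sketches the Pila--Zannier o-minimal proof, which the paper mentions later only in passing as ``a method of a different nature''. So you are not reproducing the paper's approach but supplying a genuinely different, more recent one. Raynaud's and Laurent's proofs are arithmetic (Galois action on torsion via $p$-adic methods, respectively exponential Diophantine equations), while your route trades these for o-minimality plus a Galois lower bound; the Pila--Zannier template has the advantage of being uniform across $G$ and of generalising to mixed and Shimura settings.

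Your sketch is essentially correct, but one step is understated. You write that the $\pi$-image of the semialgebraic part $Z^{\mathrm{alg}}$ ``consists of translates of positive-dimensional algebraic subgroups contained in $V$''. This is exactly the Ax--Lindemann--Weierstrass theorem (Ax for tori, its abelian analogue for $A$), and it is a substantial ingredient, not a formality of the template: one must show that a maximal connected real semialgebraic arc in $\pi^{-1}(V)$ is contained in a translate of a rational subspace of $\widetilde G$. If you are going to flag Masser's isogeny bound as ``the source of most of the depth'', you should flag Ax--Lindemann alongside it; both are genuine inputs, and without the latter the counting argument does not conclude. The inductive descent at the end also needs a finiteness statement for maximal torsion cosets contained in $V$ (so that the torsion not yet accounted for is again Zariski dense in the complement), but this is indeed standard once Ax--Lindemann is in hand.
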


The Manin-Mumford Conjecture
was proved by M. Raynaud \cite{ra2} for abelian varieties and  by  M. Laurent 
{\cite{lau} 
for tori. 
%and finally by M. Hindry 
%\cite{marc}
 %for semi-abelian varieties.
  Another break-through result is
\begin{thm}\label{MC}
 \label{M}{\rm  [Mordell Conjecture]}
 There are only finitely many    $k$-rational points on a curve over  a number field $k$ and of genus at least $2$.
\end{thm}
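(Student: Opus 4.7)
The plan is to follow the Vojta--Bombieri approach to Faltings' theorem, casting the Mordell Conjecture into a framework of heights on the Jacobian. If $C(k)$ is empty there is nothing to prove, so I fix a base point $P_0 \in C(k)$ and use the Abel--Jacobi embedding $j \colon C \hookrightarrow J$ into the Jacobian $J$ of $C$, sending $P$ to the class of $P - P_0$. The hypothesis $g(C) \geq 2$ ensures that $j$ is a closed immersion and that $j(C)$ generates $J$ as a group variety, so arithmetic information about $J(k)$ can be transferred to $C(k)$.

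Next I would invoke the Mordell--Weil Theorem to write $J(k)$ as a finitely generated abelian group, and endow the real vector space $E := J(k) \otimes_{\Z} \mathbb{R}$ with the inner product $\langle \cdot, \cdot \rangle$ coming from the N\'eron--Tate height $\hat{h}$ attached to a symmetric ample divisor on $J$. The problem reduces to showing that the set $j(C(k)) \subset E$ is finite. The first geometric input is Mumford's gap principle: if $P, Q \in C(k)$ have comparable large heights then the angle between $j(P)$ and $j(Q)$ in $E$ is bounded away from zero, so that only finitely many rational points with heights in any given dyadic range can lie inside a fixed narrow cone.

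The hard step is to bound the heights of the rational points appearing in each such cone. Here I would apply Vojta's inequality in the form sharpened by Bombieri: there exist constants $\kappa_1 < 1$ and $\kappa_2 > 0$ such that whenever $P, Q \in C(k)$ satisfy $\hat{h}(j(P)) \geq \kappa_2$ and $\hat{h}(j(Q)) \geq \kappa_2 \hat{h}(j(P))$, the cosine of the angle between $j(P)$ and $j(Q)$ is at most $\kappa_1$. Combined with the gap principle, a standard sphere-packing argument covers the unit sphere in $E$ by finitely many cones each containing only finitely many rational points of large height, whence $C(k)$ is finite.

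The main obstacle is Vojta's inequality itself. Its proof requires substantial input on the surface $C \times C$: one constructs an auxiliary section of a large power of a carefully chosen line bundle vanishing to high order along the diagonal, estimates its height via arithmetic Riemann--Roch in Arakelov geometry, and controls the index of vanishing at $(P,Q)$ through a Dyson-type lemma of Esnault--Viehweg. Bombieri's reformulation replaces the Arakelov machinery by more elementary height computations in the spirit of Philippon and Zhang, but the combinatorial and geometric heart of the argument --- balancing the two-variable construction against the product structure of the pair of points --- remains unchanged and constitutes the genuine technical difficulty.
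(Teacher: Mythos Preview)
The paper does not give a proof of this statement at all: Theorem~\ref{MC} is recalled in the opening survey section as a classical landmark result, with references to Faltings' original proof via N\'eron models and Galois representations and to Vojta's alternative proof via the Vojta inequality and Arakelov theory. There is nothing to compare your proposal against except this one-paragraph historical summary.

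That said, your outline is a correct and coherent sketch of the Vojta--Bombieri approach, and it matches well the brief description the paper gives of Vojta's method (``if two points of `big' height lie on a curve, then they have a fairly big angle with respect to a height norm''). You have identified the key ingredients --- the Abel--Jacobi embedding, Mordell--Weil, Mumford's gap principle, Vojta's inequality, and the cone/sphere-packing endgame --- and you are honest about where the genuine difficulty lies. As a self-contained account of one of the known proofs, your proposal is sound; it simply goes far beyond what the paper itself undertakes for this particular theorem.
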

G. Faltings  \cite{Fa1} used sophisticated  tools involving N\'eron models and    Galois representations, to prove the Mordell Conjecture.
 P. Vojta \cite{vojta} provided a completely different proof based on the so-called Vojta inequality. Using deep Arakelov intersection theory, he showed that if two points of  `big' height lie on a curve, then they  have a fairly big angle with respect to a height norm.
Vojta's work is fundamental in the proof of G. Faltings  \cite{Fa2} of Lang's conjecture, which implies the general Mordell Conjecture via the Mordell-Weil Theorem: the set of $k$-rational points of an abelian variety over $k$ is a finitely generated group.
 \begin{thm}{\em [Mordell-Lang Conjecture (MLC)]}
 \label{ML}
Let $\Gamma$ be a subgroup of $\ambiente$ of finite rank.  Suppose that $V\subset G$ is not a translate. Then,
 the set $V\cap \Gamma$  is  not Zariski dense in $V$.

\end{thm}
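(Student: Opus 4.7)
The plan is to follow the Vojta--Faltings strategy in the abelian setting, together with Laurent's parallel argument for tori. Since the statement splits cleanly according to whether $\ambiente$ is a torus or an abelian variety, I would dispose of the toric case via Laurent's method, which combines a Dobrowolski-type lower bound for the height of non-torsion subvarieties with a geometry-of-numbers argument on $\Gamma \otimes \re$. The bulk of the work is the abelian case. There one first reduces to $\Gamma$ finitely generated: since $\Gamma$ has finite rank it sits inside the division hull of a finitely generated subgroup $\Gamma_0$, and a standard projection argument on the Mordell--Weil lattice $\Gamma_0 \otimes \re$ shows that it suffices to prove non-density of $V \cap \Gamma_0$ in $V$, with the non-translate hypothesis on $V$ preserved.

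The heart of the argument is Vojta's inequality, which is the only genuinely deep external input and whose proof rests on Arakelov intersection theory on the product $V \times V$: for a non-translate subvariety of an abelian variety there exist constants $\kappa_1, \kappa_2, \eta > 0$ depending on $V$ such that any pair of points $P, Q \in V(\overline{\qe})$ with $\h(P) \ge \kappa_1$ and $\h(Q) \ge \kappa_2 \h(P)$ satisfies
\[
\langle P, Q \rangle \;\le\; (1 - \eta)\, \h(P)^{1/2}\,\h(Q)^{1/2},
\]
where $\langle \cdot, \cdot \rangle$ is the bilinear form attached to the N\'eron--Tate quadratic form $\h$. The non-translate hypothesis on $V$ is essential here, because on a translate the angle between high-height points can be pushed arbitrarily close to $1$.

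Given Vojta's inequality, I would conclude by a packing argument. View $V \cap \Gamma_0$ inside the finite-dimensional real vector space $\Gamma_0 \otimes \re$ equipped with the norm $\h^{1/2}$, cover the unit sphere by finitely many cones of sufficiently small angular aperture (depending on $\eta$), and apply Vojta's inequality inside each cone. This forces a Mumford-type gap principle: the heights of successive points of $V \cap \Gamma_0$ lying in one cone must grow geometrically. Combined with Northcott's property (only finitely many points of bounded height and bounded degree over the field of definition), one obtains only finitely many points of $V \cap \Gamma_0$ in each cone, so $V \cap \Gamma_0$ is contained in a proper Zariski-closed subset of $V$.

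The hard part is Vojta's inequality itself; the reduction steps and the packing argument are comparatively elementary. The crucial obstruction to extracting effective information is that the constants $\kappa_1, \kappa_2, \eta$ produced by the Arakelov machinery are not effective, so this strategy yields no quantitative bound on $|V \cap \Gamma|$ nor on the exceptional subvarieties, which is precisely the motivation for the explicit torsion-anomalous height bounds developed in the later sections of the paper.
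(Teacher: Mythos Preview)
The paper does not prove Theorem~\ref{ML}: it is stated as a classical result, with the sentence following it attributing the proof to Faltings (for finitely generated $\Gamma$ in an abelian variety), and to Vojta, McQuillan and Hindry for the extension to semi-abelian varieties and $\Gamma$ of finite rank. There is therefore no ``paper's own proof'' to compare against; your proposal is in effect a sketch of the literature proof that the paper is citing.

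That said, your sketch has two substantive gaps. First, the reduction from $\Gamma$ of finite rank to $\Gamma_0$ finitely generated is not a ``standard projection argument on the Mordell--Weil lattice''. The division hull adds torsion obstructions that a real-vector-space projection cannot see; the actual reduction (Hindry \cite{marc}) uses the Manin--Mumford Conjecture together with a Galois argument on the multiplication-by-$n$ maps, and is not elementary. Second, your packing argument as written proves \emph{finiteness} of $V\cap\Gamma_0$, which is false for $\dim V>1$: a non-translate $V$ may still contain translates of positive-dimensional abelian subvarieties by points of $\Gamma_0$, and then $V\cap\Gamma_0$ is infinite. Faltings' actual argument for higher-dimensional $V$ is an induction on $\dim V$ using a version of the Vojta inequality on a product $V^m$, and the output is non-density, not finiteness. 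Your description is accurate only for the curve case, which is Vojta's original theorem.
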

 As mentioned, G. Faltings  proved it for $\Gamma$  a  finitely generated subgroup in an abelian variety (Lang's Conjecture).      Further  work of  P. Vojta  \cite{vojta}, 
 M. McQuillan 
 \cite{mcq} 
 and M. Hindry
  \cite{marc} 
 finally proved the general case of varieties in semi-abelian varieties and $\Gamma$ of finite rank.
%B. Poonen \cite{Poon} gave a generalisation of the Mordell-Lang and of the Bogomolov conjectures stated below.
Note that the Mordell-Lang Conjecture implies  the Manin-Mumford Conjecture, because $\mathrm{Tor}_G$ is a subgroup of rank zero. \smallskip

  In  1981,  F. Bogomolov \cite{bogomolov} stated a conjecture for curves, which generalises as follows. Let $V(\theta)$ be the set of points of $V$ of height at most $\theta$ and $ \overline{V(\theta)}$ its Zariski closure.
\begin{thm}{\em [Bogomolov Conjecture]}
A variety $V\subset G$ is non torsion if and only if its essential minimum  $$\mu(V):=\inf\{\theta \in\mathbb{R}\,\,\,:\,\,\, \overline{V(\theta)}=V\}$$ is strictly positive.
\end{thm}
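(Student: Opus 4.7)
The easy direction is immediate: if $\varieta$ is torsion, write $\varieta=\bigcup_{i}(\upxi_i+H_i)$ with $\upxi_i\in\mathrm{Tor}_\ambiente$ and $H_i$ algebraic subgroups. The torsion of each $H_i$ is Zariski dense in $H_i$ and has height zero as torsion of $\ambiente$, so $\varieta(0)$ is already Zariski dense in $\varieta$ and hence $\mu(\varieta)=0$. The substantive direction to prove is the converse: if $\mu(\varieta)=0$ (it cannot be negative since $\h\ge 0$), then $\varieta$ is torsion.

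My plan is to proceed via arithmetic equidistribution. Since $\mu(\varieta)=0$, I would first extract a sequence $(x_n)\subset \varieta(\qub)$ which is \emph{generic} (every infinite subsequence is Zariski dense in $\varieta$) and \emph{small} ($\h(x_n)\to 0$); such a sequence exists by a standard diagonal extraction over a countable cover of the proper closed subvarieties of $\varieta$. I would then invoke Bilu's theorem when $\ambiente=\gum^n$ and the Szpiro--Ullmo--Zhang theorem when $\ambiente$ is abelian: the normalised counting measures on the Galois orbits of the $x_n$ converge weakly to a canonical probability measure $\mu_\varieta$ on $\varieta(\co)$ (Haar-type in the toric case, the canonical Chambert-Loir measure in the abelian case).

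To conclude I would exploit the Ullmo--Zhang difference trick. Consider the morphism $\alpha:\varieta\times\varieta\to\ambiente$, $(x,y)\mapsto x-y$. Taking a second independent small generic sequence $(x'_n)$ on $\varieta$, the pairs $(x_n,x'_n)$ map via $\alpha$ to a small generic sequence in $W:=\overline{\alpha(\varieta\times\varieta)}$, so $\mu(W)=0$ as well. Equidistribution applied on both $\varieta\times\varieta$ and $W$ would then force the pushforward of $\mu_\varieta\otimes\mu_\varieta$ along $\alpha$ to coincide with $\mu_W$; comparing these canonical measures via translation invariance and a dimension count yields $\dim W=\dim\varieta$, which means $\varieta$ is stable under translation by a positive-dimensional algebraic subgroup. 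Inducting on dimension through the stabiliser quotient, $\varieta$ collapses to a torsion translate of an algebraic subgroup, as required.

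The main obstacle is the equidistribution input itself, which rests on deep arithmetic intersection theory: Zhang's arithmetic Hilbert--Samuel inequality, canonical adelic metrics, and positivity of arithmetic intersection numbers on nef classes. Once equidistribution is granted, the descent to ``$\varieta$ is torsion'' is comparatively formal. Alternative routes exist — the Bombieri--Zannier approach for tori replaces equidistribution by Dobrowolski-type lower bounds and a Galois-theoretic averaging argument, while David--Philippon for abelian varieties use transcendence methods — but all strategies share the common difficulty of converting height lower bounds for \emph{generic} small points into a genuine positive lower bound for the essential minimum $\mu(\varieta)$.
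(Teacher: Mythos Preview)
Your sketch is a reasonable outline of the Ullmo--Zhang strategy, but there is nothing in the paper to compare it against: the paper does not prove this statement. The Bogomolov Conjecture is listed in Section~1 among the ``classical landmark theorems'' and is simply stated with the attribution ``proved by E.~Ullmo for curves and by S.~Zhang in general,'' with citations to \cite{ulmo} and \cite{ZhangEquidistribution}. No argument is given or even sketched.

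What you have written is essentially a summary of the proof in those cited references (equidistribution of small points plus the difference-morphism descent), so in that sense your proposal is consistent with what the paper points to. A couple of small cautions on your sketch itself: the step where you take a ``second independent small generic sequence $(x'_n)$'' is not quite how the argument runs --- one typically works with Galois orbits of a single sequence and uses that distinct conjugates already furnish enough pairs in $V\times V$; and the conclusion ``$\dim W=\dim V$ forces $V$ to have positive-dimensional stabiliser'' deserves a line of justification (it follows from $V-V$ having the same dimension as $V$, hence $V$ is a coset). But these are refinements of an otherwise sound plan, and in any case the paper itself offers no proof for you to match.
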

This generalises the  Manin-Mumford Conjecture, because the torsion points  are precisely the points of  height zero. The Bogomolov Conjecture was proved by   E. Ullmo \cite{ulmo} for curves and by S. Zhang \cite{ZhangEquidistribution} in general. \smallskip

Effective and explicit versions of this conjecture have important implications, as clarified below. 
For points, the  Bogomolov Conjecture becomes a generalisation of Kronecker's Theorem, and explicit versions are  a  generalisation (up to a $\log$ term) of a famous conjecture of 
E.  and D.-H. Lehmer (see \cite{lehmer}):  for an algebraic number $\alpha$ not a root of unity 
$
h(\alpha)\geq\frac{c}{[\qe( \alpha):\qe]},
$ with $c$ an explicit constant. The best result proved for algebraic numbers is Dobrowolski's  bound
 \cite{dob}
: $$
h(\alpha)\geq\frac{c}{[\qe( \alpha):\qe]}\left(\frac{\log [\qe( \alpha):\qe]}{\log\log[\qe( \alpha):\qe]}\right)^{-3}.$$
In general, we state the following conjecture:
\begin{con}{\em [Lehmer Type Bound] }
\label{lemer} Assume that  the  group variety $G$ is defined over a number field $k$.
Let $\alpha $ be a  point of $G$ and let $B$ be  the irreducible torsion variety of minimal dimension containing $\alpha$. Then,
 for any real $\eta>0$, there exists a positive  constant $c(\ambiente,\eta)>0$ such that  $$h(\alpha)\ge c(\ambiente,\eta)\frac{(\deg B)^{\frac{1}{\dim B}-\eta}}{[k_\mathrm{tor}(\alpha):k_\mathrm{tor}]^{\frac{1}{\dim B}+ \eta}},$$
 where $k_\mathrm{tor}$ is the field of definition of the torsion of $G$.
\end{con}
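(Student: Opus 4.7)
The plan is to adapt the transcendence approach of Dobrowolski, refined by Amoroso and David in the toric case, to the general group variety $G$. The first step is a reduction to the weak-transverse situation. Write $B=\tau+H$, with $\tau$ a torsion point and $H$ an algebraic subgroup of $G$, and decompose $\alpha=\tau+\beta$ with $\beta\in H$. By the minimality of $B$, the point $\beta$ is not contained in any proper torsion subvariety of $H$, hence $\beta$ is weak-transverse in $H$. All quantities in the conjecture are invariant under this translation, so it suffices to prove the bound for a weak-transverse point $\beta$ in $H$, with $B$ replaced by $H$.

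Next, following the Amoroso--David scheme, one constructs an auxiliary global section $F$ of a suitable ample line bundle on a compactification of $H$, of controlled degree $D$ and normalized height, vanishing to prescribed order $T_0$ at each multiple $[i]\beta$ for $0\le i\le T_1$. The parameters $D,T_0,T_1$ are chosen through an arithmetic Siegel lemma, calibrated against the arithmetic Hilbert function of $H$. The amplification is obtained from the Galois action of $\Gal(\overline{\qe}/k_\mathrm{tor}(\alpha))$: for primes $p$ in a suitable set of size comparable to $\log D$, unramified and admitting good Frobenius lifts, the vanishing of $F$ at $[i]\beta$ propagates to $[ip]\beta$ with only a logarithmic loss, and it is at this stage that the denominator $[k_\mathrm{tor}(\alpha):k_\mathrm{tor}]$ appears with the correct power. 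A final arithmetic Bezout inequality of Philippon type, as in \cite{patriceI} and \cite{patrice}, together with an interpolation-determinant lower bound for the nonzero values of $F$, converts the surplus of zeros into the asserted lower bound for $h(\alpha)$.

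The hard part is twofold. Geometrically, one needs an arithmetic Hilbert function for arbitrary algebraic subgroups $H\subset G$ with sharp polynomial dependence on $\deg H$, so that the exponent $1/\dim B$ appears without any loss. Arithmetically, the Frobenius extrapolation incurs a Dobrowolski-type factor $(\log\log / \log)^{O(1)}$, which is absorbed by the $\eta$-loss in the conjectural exponents; but preventing any additional degradation in dimension at least two currently requires uniform isogeny estimates \`a la Masser--W\"ustholz that are unavailable outside the CM case. This is exactly why Conjecture~\ref{lemer} is proven in the toric case by Amoroso and David, but remains open for general abelian $G$, with only partial results due to Laurent, Ratazzi, David--Hindry and others.
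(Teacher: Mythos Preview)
The statement you are addressing is labeled \emph{Conjecture} in the paper, not a theorem, and the paper gives no proof of it. After stating the Lehmer Type Bound, the author simply records the known partial results (Amoroso--David in tori, David--Hindry and Carrizosa in the CM abelian case) and stresses that \emph{``no known method generalises such a sharp bound in a general abelian variety''}. There is thus no ``paper's own proof'' to compare your proposal against.

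Your write-up is not a proof either, and you essentially acknowledge this in the final paragraph: the Frobenius/extrapolation step and the required uniform isogeny input are unavailable outside the CM case, which is precisely why the statement is a conjecture. So what you have produced is a reasonable heuristic outline of the Dobrowolski/Amoroso--David strategy together with a diagnosis of the obstruction, not a proof proposal in any meaningful sense. If the intent was to prove the conjecture, the gap is exactly the one you name: the arithmetic Hilbert-function input and the Masser--W\"ustholz-type isogeny bounds needed for the extrapolation step are not known with the required uniformity for a general abelian $G$, and nothing in your sketch supplies them. If instead the intent was only to explain the shape of the expected argument, that explanation is broadly accurate and in line with the literature the paper cites, but it should not be presented as a proof.
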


 F. Amoroso and  S. David  \cite{Amo-Dav 1999} 
 % \cite{Amo-Dav 2004} 
   proved the conjecture in the toric case for weak-transverse points. F. Amoroso and myself proved a general explicit result in Tori \cite{commentari}. S. David and M. Hindry \cite{davidhindry} proved this bound for weak-transverse points in abelian varieties with  complex multiplication (in short CM). M. Carrizzosa  \cite{carrizosaIMRN} proved it for points in CM abelian varieties.
   No known method  generalises such a sharp bound in a general abelian variety. This is a big obstruction when applying the bound. Some weaker bounds in this case are given for instance by D. Masser \cite{Masser} and M. Baker and J. Silverman \cite{BakSil}.
\smallskip\\
In positive dimension,  an effective version of the Bogomolov Conjecture is 
\begin{con}{\em [Bogomolov Type Bound]}
\label{bog}
Assume that  $X\subset G$ is irreducible and not a translate. Let $H$ be the irreducible translate of minimal dimension containing $X$. Then,  for any real $\eta>0$, 
there exists an effective  positive constant $c(\ambiente,\eta)$  such that 
$$
\mu(X)\geq c(\ambiente,\eta)\frac{(\deg H)^{\frac{1}{\cod_H X}-\eta}}{(\deg X)^{\frac{1}{\cod_H X}+\eta}},
$$
where $\cod_H X$ is the codimension of $X$ in $H$.
\end{con}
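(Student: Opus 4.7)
The statement is a conjecture, open in full generality; my plan is therefore to sketch the strategy along which all known partial results are obtained, and to pinpoint the step where it currently stalls. The overall scheme is a descent on $\dim X$ whose base case is Conjecture \ref{lemer}, coupled in the inductive step with Philippon's arithmetic B\'ezout inequality \cite{patrice}, the Amoroso--David auxiliary function method \cite{Amo-Dav 1999, commentari}, and the equidistribution of small points \cite{ulmo, ZhangEquidistribution}.

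First I would reduce to the transverse case. Translating $X$ suitably, one may assume that the minimal translate $H$ containing $X$ is an abelian subvariety or a subtorus, and then replace the ambient $G$ by $H$; since $X$ is transverse in $H$ by construction, the factor $(\deg H)^{\frac{1}{\cod_H X}-\eta}$ in the statement is exactly the renormalisation of degrees produced by this change of ambient group, via a Zhang-type projection inequality. Next, assuming the bound inductively for subvarieties of smaller dimension, I would construct an auxiliary polynomial (or global section of a suitable symmetric line bundle on $G$) vanishing on $X$ with prescribed multi-degree and controlled height, and specialise it at a Galois orbit of points of $X(\overline{\qe})$ whose heights are close to $\mu(X)$. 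If $\mu(X)$ were smaller than the bound claimed, the arithmetic B\'ezout inequality applied to the resulting zero-dimensional intersection would yield two incompatible height estimates. The exponent $\tfrac{1}{\cod_H X}$ enters precisely because $\cod_H X$ governs the dimension of the generic algebraic subgroup of $H$ cutting $X$ in a finite set, which is what the auxiliary construction must detect.

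The critical obstacle is the base case $\dim X = 0$, namely Conjecture \ref{lemer}. As the excerpt recalls, a Lehmer bound of the sharp form required is known for tori (Amoroso--David \cite{Amo-Dav 1999}, Amoroso--Viada \cite{commentari}) and for CM abelian varieties (David--Hindry \cite{davidhindry}, Carrizosa \cite{carrizosaIMRN}); outside these settings only the substantially weaker bounds of Masser \cite{Masser} and Baker--Silverman \cite{BakSil} are available, and these fall short of the exponents $\tfrac{1}{\cod_H X}\pm\eta$. Consequently the plan above yields an unconditional proof of the stated conjecture in the toric and CM abelian cases, and only a qualitatively weaker effective version in general; a full proof remains contingent on sharp progress on Conjecture \ref{lemer}.
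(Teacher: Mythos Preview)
You are right that the paper does not prove this statement: it is recorded as an open conjecture, and the surrounding text only cites the partial results of Amoroso--David \cite{Amo-Dav 2003}, Amoroso--Viada \cite{iofrancesco}, Galateau \cite{galateau}, and the reduction in \cite{funtoriale}. So there is no ``paper's own proof'' to compare against, and your decision to sketch a strategy rather than claim a proof is appropriate.

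That said, the strategy you outline contains a genuine misconception. You frame the argument as a descent on $\dim X$ whose base case is the Lehmer Type Bound (Conjecture~\ref{lemer}), and you identify the lack of a sharp Lehmer bound outside the CM case as \emph{the} obstruction. This is not how the known cases go, and it is contradicted by the very references the paper gives. Galateau's result \cite{galateau} establishes the Bogomolov Type Bound for transverse varieties in any abelian variety with a positive density of ordinary primes; this class is strictly larger than the CM class, and in particular includes abelian varieties for which no sharp Lehmer bound is known. Hence the Bogomolov bound cannot be logically downstream of the Lehmer bound in the way you describe. The methods of \cite{Amo-Dav 2003}, \cite{iofrancesco} and \cite{galateau} are direct auxiliary-function/interpolation arguments on the positive-dimensional variety itself, not inductions terminating at points. (Note also that a point is always a translate, so the hypothesis ``$X$ not a translate'' already forces $\dim X\ge 1$; there is no $\dim X=0$ base case inside this conjecture.)

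Your reduction from ``not a translate'' to ``transverse in $H$'' is, by contrast, correct and is exactly the content of \cite{funtoriale} cited in the paper. If you want to revise the sketch, replace the Lehmer-based descent with a reference to the direct transcendence methods above, and relocate the genuine open territory to abelian varieties \emph{without} a known positive density of ordinary primes.
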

   This   was proved for transverse varieties in a torus by  F. Amoroso and S. David \cite{Amo-Dav 2003}. The sharpest explicit bound in tori is given by  F. Amoroso and myself in \cite{iofrancesco}. We avoided a complicated descent method used before. Instead,  we used a geometric induction which is simple and optimal. A. Galateau   \cite{galateau} proved this bound for transverse varieties in abelian varieties with a positive density of ordinary primes. In \cite{funtoriale} with S. Checcoli and F. Veneziano, we  deduce from Galateau's bound the Bogomolov bound for non-translates.

\section{Anomalous and Torsion Anomalous Intersections}
\bigskip
 Anomalous Intersections are a natural framework for these kind of problems.
  \begin{D} An irreducible subvariety $Y$ of $V\subset G$ is $V$-torsion anomalous if:
 \begin{itemize}
\item[(i)] $Y$  is a component of  the intersection of $V$ with  a proper torsion variety $B$ of $G$; and
\item[(ii)] $Y$ has  dimension larger than expected, i.e. $${\rm{codim }}Y< {\rm{codim }} V+{\rm{codim }} B.$$
  \end{itemize} In addition, $Y$ of positive dimension  is $V$-anomalous if the above conditions are  satisfied with $B$ a proper translate of $G$.

We denote by $V^{ta}$ the set of points of $V$ deprived of the $V$-torsion anomalous varieties and by $V^{oa}$ the set of points of $V$ deprived of the $V$-anomalous varieties.

We say that $B$ is \emph{minimal} for $Y$ if it satisfies (i) and (ii) and  it has minimal dimension. The codimension of $Y$ in its minimal $B$ is called the \emph{relative codimension} of $Y$.

We also say that a $V$-(torsion) anomalous variety $Y$ is \emph{maximal}  if it is not contained in any $V$-(torsion) anomalous variety of strictly larger dimension. \end{D} 

We note that we define  torsion anomalous points, while in the original definitions they were excluded. Our choice makes  some statements easier. It is clear that any point would be anomalous, so in this case the dimension must be positive.
This point of view,  introduced by E. Bombieri, D. Masser and U. Zannier   \cite{BMZ_tori}, has been successfully exploited in the development of the theory, and still leads the research in this field. The following conjecture has been open for several years:
\begin{TAC}  For an irreducible variety $V\subset G$, there are only finitely many maximal $V$-torsion anomalous varieties.
\end{TAC}
 The special case of considering only   the $V$-torsion anomalous varieties that  come from an intersection expected to be empty, was considered previously by B. Zilber \cite{Zilber}. So the TAC implies:
 \begin{CIT}  If $V\subset G$ is weak-transverse, then the intersection of $V$ with the union of all algebraic subgroups of codimension at least $\dim V+1$  is non dense in $V$. 
\end{CIT}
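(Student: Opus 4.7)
The plan is to deduce CIT directly from TAC by recognising the set in question as contained in a finite union of maximal $V$-torsion anomalous varieties. Let $V\subset G$ be weak-transverse and set $S=V\cap\bigcup_B B$, where $B$ ranges over all algebraic subgroups of $G$ with $\mathrm{codim}\, B\geq \dim V +1$. Since intersection commutes with union, $S=\bigcup_B(V\cap B)$, and decomposing each $V\cap B$ into irreducible components exhibits $S$ as a (possibly infinite) union of irreducible subvarieties of $V$. The task reduces to enclosing all such components in a proper Zariski-closed subset of $V$.

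The crux is that every such component $Y$ is $V$-torsion anomalous. First, the ambient $B$ is a proper torsion variety: algebraic subgroups are torsion varieties, and $\mathrm{codim}\, B\geq \dim V+1\geq 1$ forces $B\subsetneq G$. Second, the expected codimension $\mathrm{codim}\, V+\mathrm{codim}\, B$ is at least $\mathrm{codim}\, V+\dim V+1=\dim G+1$, which strictly exceeds $\dim G$ and therefore exceeds $\mathrm{codim}\, Y$. Thus $V\cap B$ is expected to be empty, and every actual component automatically has larger-than-expected dimension, verifying condition (ii) of the definition.

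Since the dimension of any $V$-torsion anomalous variety is bounded by $\dim V$, each such $Y$ is contained in some maximal $V$-torsion anomalous variety. By TAC there are only finitely many such maximal varieties $Z_1,\dots,Z_N$, and hence $S\subseteq Z_1\cup\cdots\cup Z_N$. The weak-transversality of $V$ enters precisely to prevent $Z_i=V$: if $V$ were itself $V$-torsion anomalous, the definition would force $V\subset B$ for some proper torsion variety $B$, contradicting weak-transversality. Thus $S$ is contained in a finite union of proper Zariski-closed subvarieties of $V$ and therefore is not dense. The argument is essentially bookkeeping once TAC is granted; the only delicate ingredient is the use of weak-transversality to exclude $V$ itself from the list of maximal torsion anomalous subvarieties.
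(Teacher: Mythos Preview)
Your proposal is correct and matches the paper's own reasoning: the paper does not give a separate proof of CIT but simply remarks that the points in question are precisely those lying on $V$-torsion anomalous components arising from intersections expected to be empty, so that TAC immediately yields CIT. Your argument is exactly this observation written out in full, including the use of weak-transversality to rule out $V$ itself as a maximal torsion anomalous subvariety.
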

  The CIT, as well as the TAC, implies several celebrated questions such as the Manin-Mumford Conjecture and  the Mordell-Lang Conjecture; they are  related to  the Bogomolov Conjecture and  to previous investigations by S. Lang, P. Liardet, F. Bogomolov, S. Zhang and others. Further connections to  model theory have arisen when similar issues were considered by B. Zilber.  Recent works in the context of the  Morton and Silverman Conjectures highlight an important  interplay between Anomalous Intersections and Algebraic Dynamics (for an introduction to the subject see \cite{Sil.2}).

The TAC  has been only partially answered, after having received great attention by many mathematicians. Only the following few cases of the TAC are known:  for curves  in a product of elliptic curves (E. Viada  \cite{ioant}), in abelian varieties with CM (G. R\'emond \cite{RemondIII}), in abelian varieties with a positive density of ordinary primes (E. Viada \cite{ijnt}), in any abelian variety (P. Habegger and J. Pila \cite{PP}) and in a torus  (G. Maurin  \cite{Maurin}); for varieties of codimension  $2$ in a torus (E. Bombieri, D. Masser and U. Zannier  \cite{BMZ1}) and in a product of elliptic curves with CM  (S. Checcoli, F. Veneziano and E. Viada \cite{TAI}).  Under  stronger geometric hypotheses on $V$, related results have been proved by  many other authors.

A  seminal result in this context has been the proof of the CIT 
 for  transverse curves in a torus,   by
 E. Bombieri, D. Masser and U. Zannier \cite{BMZ_tori}.  Their proof builds on two main statements.\\
\centerline{(a) The set of $C$-torsion anomalous points has bounded height.}

\centerline{(b) The set of $C$-torsion anomalous points has bounded degree.}
Then, by Northcott Theorem the set is finite. A novel idea in the   proof of part (b) is the use of a Lehmer type bound. An approach of similar nature has been used later in several papers. Unfortunately  the  Lehmer type bound is only known in CM abelian varieties, so the method does not work in non CM  abelian varieties. The author introduced a new method  which avoids the use of such a bound and  uses instead a  Bogomolov type bound which is known in some non CM abelian varieties.
In  \cite{ioant}, the author proved the TAC  for   curves in any power $E^N$ of an elliptic curve. Then, in \cite{ioimrn}  and \cite{ijnt} she  generalised the method. A method of a different nature, based on the Pila-Wilkie approach for counting points in  o-minimal structures, was introduced by J. Pila and U. Zannier \cite{PZ} and later exploited by several authors.\smallskip

For varieties of general dimension the situation is much more complicated. In this case the dimension of the torsion anomalous varieties can be bigger than zero, so for instance not all such anomalous varieties are maximal. We shortly explain the method used   for some varieties in tori  in \cite{BMZ1} and in products of CM elliptic curves in \cite{TAI}. In section \ref{BHC}, Theorem \ref{codimensionerelativauno} we give some more details and in section \ref{rangoN} we extend the bounds for curves. 

Let $Y$ be a $V$-torsion anomalous variety. So $Y$ is a component of an intersection $V\cap B$ with $B$ a minimal torsion variety.

\begin{itemize}
\item[(a)] Use the Zhang inequality \cite{ZhangEquidistribution} and the Arithmetic B\'ezout Theorem due to P. Philippon \cite{patrice} (both recalled in the next section) to get an upper bound for the essential minimum of type $$\mu(Y)\le c_1\frac{(\deg B)^{e_1}}{[k(Y):k]\deg Y}$$ for some rational exponent $e_1$.
\item[(b)] If $Y$ has positive dimension, use the Bogomolov lower bound for $\mu(Y)$  and if  $Y$ is a point, use the Lehmer lower bound for $\mu(Y)=h(Y)$. These give a bound of type
$$c_2\left(\frac{\deg B}{[k_{\rm tor}(Y):k_{\rm tor}]\deg Y}\right)^{e_2}\le \mu(Y)$$ for some rational exponent $e_2$.

\end{itemize}
Clearly if $e_1< e_2$, then  these two bounds imply that $\deg B \le c_3$. In addition, there are only finitely many algebraic subgroups of $G$ of degree $\le c_3$. Thus, there are only finitely many $V$-torsion anomalous varieties. %We give some more details on this method at section \ref{BHC}
In particular,  for varieties of codimension $2$ in a torus and in  a product of CM elliptic curves $e_1<e_2$ holds.

 \section{Bounded Height Conjectures}
\label{BHC}

 It is well known that an effective proof of  the CIT  implies the   Effective Mordell-Lang Conjecture, which has a strong impact on mathematics and computer science. Obtaining effective versions of the Mordell-Weil Theorem would be an astounding break-through.

 The effectivity of the results is a key point in  Diophantine Geometry, being the difference between a purely theoretical statement and a theorem which could, in principle, provide a way to explicitly solve diophantine equations.  
  Let us underline the difference between the words `effective', `explicit' and  `quantitative' when talking about solving diophantine equations:  a result  is effective if it provides a method that, in principle, allows to find all solutions;  a result is explicit if it gives in a simple way such solutions.  In particular an effective result guarantees the existence of an algorithm to find all solutions, while an explicit result provides the algorithm itself. On the other hand, quantitative is a weaker concept and it means to bound only the number of solutions. Notwithstanding their importance, several central results in  Diophantine Geometry are, unfortunately, not effective.  Among them  are the  above mentioned Mordell-Lang Conjecture  and Mordell-Weil Theorem, whose proofs  do not provide an upper bound for the height of the points, and consequently we do not know how to find such points.  \smallskip

Similarly, the obstruction to an effective proof of  the TAC and of the CIT is due  to  the lack of effective bounds for the normalized height of the torsion anomalous varieties.  In the following we generalize  the  Bounded Height Conjecture (BHC), formulated by E. Bombieri, D. Masser and U. Zannier  \cite{BMZ_tori} and proven by P.  Habbeger (see \cite{hab} and \cite{philipp}). Their BHC says nothing   for varieties  that  are not transverse or that are transverse, but  covered by anomalous varieties. Indeed in these cases $V^{oa}$ is empty. In addition their statements are not effective.
\begin{con}[BHC]\label{BHCB} For an irreducible  variety  $V\subset G$ , the set $V^{oa} \cap \mathcal{B}_{\dim V}$ has bounded height.\end{con}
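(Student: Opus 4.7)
The plan is to adapt Habegger's original bounded-height argument \cite{hab} to the generalized setting by combining the Arithmetic B\'ezout Theorem with a fibration argument driven by the $V^{oa}$-hypothesis. Let $P \in V^{oa} \cap \mathcal{B}_{\dim V}$ and let $B$ be an algebraic subgroup of minimal dimension containing $P$ with $\codim B \ge \dim V$. Let $Y$ denote the irreducible component of $V \cap B$ through $P$.

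First, a geometric reduction. Since $B$ is both a torsion variety and, trivially, a translate of itself, if $Y$ had positive dimension and satisfied the excess-dimension inequality $\codim Y < \codim V + \codim B$, then $Y$ would be a $V$-anomalous variety of positive dimension, contradicting $P \in V^{oa}$. When $\codim B = \dim V$ the expected dimension of the intersection is $0$, and when $\codim B > \dim V$ every component of $V \cap B$ has excess dimension. Hence the only way $P$ can survive in $V^{oa}$ is if $\dim Y = 0$, i.e. $\{P\}$ is an isolated component of $V \cap B$.

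Next, an arithmetic upper bound. Apply Zhang's inequality to the $0$-dimensional component $\{P\} = Y$, giving $\hat{h}(P) \le h(Y)/\deg Y$, and then the Arithmetic B\'ezout Theorem of Philippon to bound $h(Y)$ in terms of $h(V), \deg V$ and $h(B), \deg B$. Since $B$ is an algebraic subgroup, $h(B)=0$, yielding a bound of the shape
$$
\hat{h}(P) \le c_1 \frac{(\deg B)(h(V)+\deg V)}{\deg Y}.
$$
A complementary B\'ezout estimate bounds $\deg B / \deg Y$ from above in terms of $\deg V$ alone.

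The main obstacle is uniformizing this estimate as $B$ ranges over the infinitely many algebraic subgroups of $G$ of codimension at least $\dim V$. The crucial geometric input is a fibration argument: for each such $B$, the hypothesis $P \in V^{oa}$ forces the quotient map $V \to G/B$ to be finite at $P$, since any positive-dimensional fiber through $P$ would descend to a positive-dimensional $V$-anomalous subvariety containing $P$, contradicting the hypothesis. Parametrizing algebraic subgroups by their tangent subspaces at the origin and running a compactness argument on the Grassmannian of the Lie algebra of $G$, one reduces to finitely many combinatorial types of $B$, after which the previous height bound becomes uniform. The technical difficulty specific to a general non-CM abelian variety is the absence of a sharp Lehmer-type bound; the argument therefore has to rely on the weaker Bogomolov-type bound of Galateau, combined with the deformation estimates of Habegger and R\'emond in place of a direct Lehmer input.
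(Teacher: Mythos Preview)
The paper does not prove this statement. Conjecture~\ref{BHCB} is merely stated, with the remark that it was formulated by Bombieri, Masser and Zannier and proved by Habegger in \cite{hab} and \cite{philipp}; no argument is given or sketched in the paper. So there is no in-paper proof to compare your proposal against.

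On the substance of your outline: the geometric reduction forcing $\dim Y=0$ from $P\in V^{oa}$ is correct, but the arithmetic part has a genuine gap. After Arithmetic B\'ezout and Zhang you get a bound of the shape $\hat h(P)\ll (\deg B)(h(V)+\deg V)/\deg Y$, and the whole difficulty of the BHC is that $\deg B$ is unbounded as $B$ varies. Your sentence ``a complementary B\'ezout estimate bounds $\deg B/\deg Y$ from above in terms of $\deg V$ alone'' is exactly the missing theorem: B\'ezout gives $\deg Y\le (\deg V)(\deg B)$, which goes the wrong way, and there is no elementary intersection-theoretic inequality that controls $\deg B/\deg Y$ from above. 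Likewise, compactness of the Grassmannian of Lie subalgebras does not reduce to finitely many subgroups $B$: infinitely many algebraic subgroups share the same tangent space at the origin, so ``finitely many combinatorial types of $B$'' does not by itself make the bound uniform.

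Habegger's actual proof in \cite{hab,philipp} is of a different nature: it pulls back by the multiplication-by-$n$ map, uses an Ax-type theorem to guarantee generic finiteness of the projection $V\to G/B$ on $V^{oa}$, and runs a limiting/deformation argument so that the dependence on $\deg B$ cancels. Neither a Lehmer-type nor a Bogomolov-type lower bound is used for Conjecture~\ref{BHCB}; those ingredients enter in the paper only for the variants BHC$'$/EBHC and for the finiteness statements, not for the bounded-height statement you are addressing.
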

Recall that $V^{oa}$ is the   set of points of $V$ deprived of the $V$-anomalous varieties and $\mathcal{B}_{\dim V}= \cup_{\codim B\ge \dim V}B$ is the union of all algebraic subgroups of $G$ of codimension at least $\dim V$.
We state here a natural variant of this conjecture which has particular significance for weak-transverse varieties and remains open in its generality.

 \begin{con}[BHC'] For an irreducible  variety  $V\subset G$,  the   maximal $V$-torsion anomalous points have  bounded  height.\end{con}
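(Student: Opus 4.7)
The plan is to adapt the two-bounds framework from Section~3 to the zero-dimensional case. Let $P$ be a maximal $V$-torsion anomalous point and let $B$ be a minimal proper torsion variety such that $\{P\}$ is an isolated component of $V\cap B$. By the definition of torsion anomaly, $\dim B<\cod V$.

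For the upper estimate I would apply P.~Philippon's Arithmetic B\'ezout Theorem to the component $\{P\}$ of $V\cap B$. Since $h(B)=0$ (as $B$ is a torsion variety) and $\mu(\{P\})=h(P)$, Zhang's inequality combined with Arithmetic B\'ezout yields a bound of the shape
\[
h(P)\,[k(P):k]\le c_1\,(h(V)+\deg V)\,\deg B.
\]
For the lower estimate I would apply the Lehmer-type bound (Conjecture~\ref{lemer}) to $P$ with respect to its minimal torsion variety $B$:
\[
h(P)\ge c_2\,\frac{(\deg B)^{1/\dim B-\eta}}{[k_{\mathrm{tor}}(P):k_{\mathrm{tor}}]^{1/\dim B+\eta}}.
\]
The two estimates leave $\deg B$ with exponent $1$ above and $1/\dim B-\eta$ below, so the naive combination does not trap $\deg B$. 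To close the argument I would invoke the maximality of $P$ together with a Galois-orbit trick as in \cite{BMZ_tori,ioant,ioimrn}: each conjugate $\sigma P$ over $k(V)$ is itself a maximal $V$-torsion anomalous point inside $\sigma B$, and summing the upper bound over the orbit enlarges the factor $[k(P):k]$ enough to allow $\deg B$ to be eliminated between the two inequalities, giving an absolute bound on $h(P)$ in terms of $h(V)$, $\deg V$ and $G$.

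The main obstacle is the status of Conjecture~\ref{lemer} itself: in its full strength it is known only in tori \cite{Amo-Dav 1999,commentari} and in CM abelian varieties \cite{davidhindry,carrizosaIMRN}, while in a general abelian variety only the weaker lower bounds of Masser \cite{Masser} and Baker-Silverman \cite{BakSil} are available, with exponents that are too small for the comparison above to close. A further subtlety is that the Galois-orbit argument must run uniformly over $\dim B\in\{1,\dots,\cod V-1\}$, and one needs the maximality of $P$ in an essential way to prevent its absorption into a positive-dimensional torsion anomalous variety where the exponents balance differently. It is precisely the absence of an unconditional Lehmer input, together with this bookkeeping, that keeps BHC' open beyond the CM cases treated in \cite{TAI,ioant,ijnt}.
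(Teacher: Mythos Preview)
The statement you are attempting to prove is labelled \emph{Conjecture} (BHC') in the paper, and the surrounding text says explicitly that it ``remains open in its generality.'' There is no proof in the paper to compare against; the paper only establishes special cases (Theorems~\ref{codimensionerelativauno}, \ref{transmain}, \ref{curva2}). Your own final paragraph acknowledges this, so what you have written is not a proof but a heuristic outline followed by an explanation of why it fails---which is a reasonable thing to write, but not a proof proposal.

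That said, even as an outline your argument has a concrete gap beyond the unavailability of the Lehmer bound. Your upper estimate $h(P)[k(P):k]\le c_1(h(V)+\deg V)\deg B$ applies Arithmetic B\'ezout directly to $V\cap B$ and carries the full factor $\deg B$. In the paper's method (sketched after Theorem~\ref{codimensionerelativauno} and carried out in detail in the proof of Theorem~\ref{curva2}) one does \emph{not} intersect with $B$ itself: one first constructs an auxiliary torsion variety $B'$ (or $A_0$) by keeping only some of the linear forms defining $B$, so that $P$ is still isolated in $V\cap B'$ but $\deg B'\ll(\deg B)^{(\dim V-\dim Y)/\codim B}$, respectively $(\deg B)^{1/r}$. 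This is what drops the exponent on $\deg B$ in the upper bound below~$1$ and allows the comparison with the Lehmer lower bound (exponent $1/\dim B$) to close when $\codim B>\dim B$. Your ``Galois-orbit trick'' is already the factor $[k(P):k]$ on the left of your upper bound; summing again over the orbit gains nothing further, and without the $B'$ construction the exponents never meet for $\dim B\ge 2$. Even with this fix the argument only covers the cases the paper already proves; the general BHC' is genuinely open.
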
 
 
As shown in \cite{ijnt}, the BHC' together with a Bogomolov type bound is sufficient to prove the  CIT. The method is effective, thus an effective version of the BHC', implies the effective CIT. For curves in an abelian variety, the  BHC' is proved by G. R\'emond \cite{RemondIII}. His method relies on a generalized Vojta inequality, so it cannot be made effective. For transverse varieties in an abelian variety with a positive density of ordinary primes, the Bogomolov type bound is proved by A. Galateau \cite{galateau}. R\'emond's and  Galateau's bounds, together with the just mentioned result of the author \cite{ijnt} give the TAC for curves  in abelian varieties with a positive density of ordinary primes. Using the BHC of P. Habegger mentioned above, the Bogomolov type bound of A. Galateau and the above implication of the author, we can deduce the CIT for transverse varieties $V$ not covered by $V$-anomalous varieties in an abelian variety with a positive density of ordinary primes.

To emphasize the effectivity of the methods presented below we state the Effective Bounded Height Conjeture.

 \begin{con}[EBHC]\label{BHC'} For an irreducible  variety  $V\subset G$,  the   maximal $V$-torsion anomalous points have  effectively bounded  height.\end{con}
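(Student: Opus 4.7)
The plan is to implement the two-step scheme sketched at the end of Section 2 with explicit constants throughout. Fix a maximal $V$-torsion anomalous point $P$, and let $B$ be a minimal torsion subvariety of $G$ making $P$ a component of $V\cap B$; by definition of torsion anomalous, $\dim V+\dim B>\dim G$.

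First I would apply the Arithmetic B\'ezout Theorem of Philippon to the intersection $V\cap B$, isolate the component through $P$, and use Zhang's inequality (which for a point degenerates to $\mu(\{P\})=\hat h(P)$) to obtain an effective arithmetic upper bound of the form
$$
\hat h(P)\le c_1\,\frac{h(V)(\deg B)^{a_1}+(\deg V)\,h(B)(\deg B)^{a_2}}{[k(P):k]^{a_3}},
$$
with explicit exponents $a_i$ depending only on $\dim G$. Since $B$ is a torsion variety one has $h(B)=0$, so the second summand drops out and the bound takes the cleaner form
$$
\hat h(P)\le c_1'\,(h(V)+\deg V)\,\frac{(\deg B)^{e_1}}{[k(P):k]^{a_3}},
$$
which is the instantiation of formula (a) of Section 2 for a point.

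Second, I would introduce an effective Lehmer-type lower bound for $\hat h(P)$ via Conjecture \ref{lemer} applied to the point $P$ and its containing torsion subvariety $B$:
$$
\hat h(P)\ge c(G,\eta)\,\frac{(\deg B)^{\frac{1}{\dim B}-\eta}}{[k_{\rm tor}(P):k_{\rm tor}]^{\frac{1}{\dim B}+\eta}}.
$$
Comparing the two estimates, the exponents of $\deg B$ can be arranged so that $e_1<\frac{1}{\dim B}-\eta$ whenever the maximality of $P$ and the minimality of $B$ are tight (forcing the relative codimension to remain small). This exponent gap gives an explicit bound $\deg B\le c_3$. Substituting back into the arithmetic upper bound then produces an effective height bound $\hat h(P)\le c_4$ depending only on $G$, $V$, and $\eta$.

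The main obstacle is precisely the Lehmer-type input. In a torus, or in a CM abelian variety, Conjecture \ref{lemer} is known in the strong form required (Amoroso--David, David--Hindry, Carrizosa), but in a general abelian variety only the much weaker estimates of Masser and of Baker--Silverman are available, and their exponents typically do not beat $e_1$ on the right-hand side. Circumventing this limitation would seem to require either restricting $G$ to a class admitting a sharp Lehmer bound, or devising an alternative induction in the spirit of \cite{iofrancesco} that replaces the Lehmer bound by an effective Bogomolov-type bound on positive-dimensional auxiliary varieties constructed from $P$ and $B$. A secondary technical point is harmonising the two distinct field-of-definition contributions $[k(P):k]$ and $[k_{\rm tor}(P):k_{\rm tor}]$; this is usually managed by enlarging the base field $k$ to contain $k_{\rm tor}$ at the cost of an absolute factor, and tracking this enlargement carefully throughout the effective bookkeeping.
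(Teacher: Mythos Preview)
The statement you are attempting to prove is a \emph{conjecture} (the EBHC), not a theorem: the paper does not give a proof of it, and indeed stresses that only special cases are known (Theorems~\ref{codimensionerelativauno} and~\ref{transmain} for torsion anomalous varieties or points of \emph{relative codimension one}). So there is no ``paper's own proof'' to compare against, and any purported general argument must be regarded with suspicion.

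Your scheme has a genuine gap at the exponent comparison. You assert that ``the exponents of $\deg B$ can be arranged so that $e_1<\frac{1}{\dim B}-\eta$ whenever the maximality of $P$ and the minimality of $B$ are tight (forcing the relative codimension to remain small).'' This is not correct: maximality of $P$ among $V$-torsion anomalous varieties and minimality of $B$ among torsion varieties containing $P$ do \emph{not} force the relative codimension $\dim B$ to be small. Concretely, following the paper's sketch after Theorem~\ref{codimensionerelativauno}, the Arithmetic B\'ezout step produces an upper bound with exponent $e_1=\frac{\dim V}{\codim B}$ (for $Y=P$ a point), while the Lehmer lower bound has exponent $\frac{1}{\dim B}$. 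The torsion-anomalous inequality gives only $\dim V<\codim B$, hence $e_1<1$; but one needs $e_1<\frac{1}{\dim B}$, i.e.\ $\dim V\cdot\dim B<\codim B$, which fails as soon as $\dim B\ge 2$ and $\dim V$ is not very small. This is exactly why the paper can close the argument only when $\dim B=1$ (relative codimension one) and must leave the general EBHC open.

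So the obstruction is not merely the unavailability of a Lehmer-type bound in general abelian varieties, as you suggest; even granting Conjecture~\ref{lemer} in full, your step (a)--(b) comparison does not bound $\deg B$ for arbitrary relative codimension. Any proof of the EBHC would need a genuinely new idea beyond the (a)+(b) template.
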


Effective bounds were known only for transverse curves in a torus \cite{BMZ_tori} and in a product of elliptic curve \cite{ioannali} (to pass from power to product of elliptic curve is an immediate step  explained for instance in \cite{ioant}: in a product the matrices representing a subgroup have  several zero entries). In \cite[Theorem 1.4]{TAI}   we prove the EBHC  for $V$-torsion anomalous varieties of relative codimension one in a product of CM elliptic curves. This implies the effective TAC for such torsion anomalous varieties and in particular the effective TAC for $V$ of codimension $2$. As usual we indicate by $\ll$ an inequality up to a multiplicative constant, moreover we indicate as a subindex the variables on which the constant depends.

\begin{thm}\label{codimensionerelativauno}
Let $V$ be a weak-transverse variety in a  product $E^N$ of elliptic curves with CM  defined over a number field $k$; let $k_\mathrm{tor}$ be the field of definition of all torsion points of $E$. Then the maximal $V$-torsion anomalous varieties $Y$ of relative codimension one are finitely many. In addition  their degrees and normalized heights are bounded as follows. For any positive real $\eta$, there exist  constants depending only on $E^N$ and $\eta$ such that:
\begin{enumerate}
\item\label{mainprima} if $Y$ is not a translate then
\begin{align*} h(Y) &\ll_\eta   (h(V)+ \deg V)^{\frac{N-1}{N-\dim V-1}+\eta},\\
\deg Y &\ll_\eta\deg V (h(V)+ \deg V)^{\frac{\dim V}{N-\dim V-1}+\eta};
\end{align*}
\item\label{mainseconda} if $Y$ is a translate of positive dimension then
\begin{align*} h(Y)&\ll_\eta {(h(V)+\deg V)}^{\frac{N-2}{N-\dim V-1}+\eta}{[k_\mathrm{tor}(V):k_\mathrm{tor}]}^{\frac{\dim V-1}{N-\dim V-1}+\eta},\\
\deg Y&\ll_\eta (\deg V){(\cV)}^{\frac{\dim V-1}{N-\dim V-1}+\eta}.\end{align*}
\item\label{mainterza} if $Y$ is a point then {its N\'eron-Tate height and its degree are bounded as}
\begin{align*}\hat{h}(Y)  &\ll_\eta (h(V)+\deg V)^{\frac{N-1}{N-\dim V-1}+\eta}[k_{\mathrm{tor}}(V):k_{\mathrm{tor}}]^{\frac{\dim V}{N-\dim V-1}+\eta},\\
[\qe(Y):\mathbb{Q}]
&\ll_\eta{(\cV)}^{\frac{(\dim V+1)(N-1)}{(N-\dim V-1)^2}+\eta}.
\end{align*}
\end{enumerate}
\end{thm}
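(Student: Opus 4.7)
The approach follows the two-step scheme described in Section 2 just before the statement. Let $Y$ be a maximal $V$-torsion anomalous variety of relative codimension one, so $Y$ is an irreducible component of $V\cap B$ with $B$ a minimal torsion subvariety of $E^N$ and $\dim Y=\dim B-1$. The anomaly condition forces $\dim V\le N-2$ and $\dim B\le \dim V+1$, hence $\codim_{E^N}B\ge N-\dim V-1$; this codimension is the denominator that appears in every exponent of the statement. The plan is to derive an upper bound for the essential minimum $\mu(Y)$ via arithmetic intersection theory, a matching lower bound via Bogomolov-- or Lehmer-type results in the CM setting, equate the two to bound $\deg B$, and then translate back to bounds on $\deg Y$, $h(Y)$, $\hat h(Y)$, and $[\qe(Y):\qe]$ using classical and arithmetic B\'ezout.

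For the upper bound I apply Philippon's Arithmetic B\'ezout Theorem to the component $Y$ of $V\cap B$, obtaining
$$h(Y)\ll h(V)\deg B+h(B)\deg V+\deg V\deg B.$$
Torsion subvarieties of the CM power $E^N$ satisfy $h(B)\ll\deg B$, so the middle term is absorbed. Combined with the classical B\'ezout inequality $\deg Y\le \deg V\deg B$ and with Zhang's inequality $\mu(Y)\le h(Y)/\deg Y$, one obtains
$$\mu(Y)\ll \frac{(h(V)+\deg V)\deg B}{\deg Y},$$
which is sharpened by considering all $\Gal(\bar\qe/k)$-conjugates of $Y$ to introduce an additional factor of $[k(Y):k]^{-1}$, as in Section~2 step (a).

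For the matching lower bound I split into the three cases. If $Y$ is non-translate, Galateau's Bogomolov-type bound in CM abelian varieties, in the form extended by Checcoli--Veneziano--Viada \cite{funtoriale}, applies to $Y$ inside its minimal containing translate, which under relative codimension one coincides with $B$; after descent to the CM quotient of $E^N$ by the abelian part of $B$ this delivers the sharper lower bound on $\mu(Y)$ whose exponent is governed by $\codim_{E^N}B$. If $Y=y+H$ is a translate of positive dimension, I project to $E^N/H$: the image of $Y$ is a non-torsion point whose N\'eron--Tate height equals $\mu(Y)$, and the Lehmer-type bound of David--Hindry and Carrizosa in CM abelian varieties supplies the required lower bound, from which the factor $[k_{\rm tor}(V):k_{\rm tor}]$ of case (2) arises. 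If $Y$ is itself a point, the Lehmer bound applies directly to $\hat h(Y)$.

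Matching the two bounds in each case yields an effective inequality of the form
$$\deg B\ll_\eta (h(V)+\deg V)^{a}[k_{\rm tor}(V):k_{\rm tor}]^{b}$$
with rational exponents $a,b$ dictated by the case and tempered by the codimension $N-\dim V-1$; substituting into $\deg Y\le \deg V\deg B$ and into the Arithmetic B\'ezout bound for $h(Y)$ recovers the displayed exponents in (1)--(3), and in (3) a further Lehmer application to $Y$ yields the bound on $[\qe(Y):\qe]$. Finiteness is then automatic: only finitely many torsion subvarieties of $E^N$ have degree below any fixed bound, and each intersection $V\cap B$ has finitely many components. The principal technical obstacle, in my view, is case (2): there $Y$ is itself a translate and so Galateau's bound does not apply directly; one must descend through the quotient by the abelian subvariety parallel to $Y$, keeping track, uniformly and effectively, of how the normalised height, the degree of the image of $V$, and the field $k_{\rm tor}(V)$ transform under the projection.
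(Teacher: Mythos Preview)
Your two-step scheme is the right one, but the upper-bound step contains a genuine gap that prevents the argument from closing. You apply the Arithmetic B\'ezout Theorem directly to $V\cap B$ and obtain $h(Y)\ll(h(V)+\deg V)\deg B$ (incidentally, the normalised height of a torsion variety is $0$, not merely $\ll\deg B$, but that is harmless). After Zhang and Galois conjugates this gives
\[
S\,\deg Y\,\mu(Y)\;\ll\;(h(V)+\deg V)\,\deg B,
\]
i.e.\ the exponent of $\deg B$ on the right is $1$. Now the Bogomolov and Lehmer lower bounds in relative codimension one give $\mu(Y)\gg_\eta(\deg B)^{1-\eta}/(\deg Y)^{1+\eta}$, respectively $\mu(Y)\gg_\eta(\deg B)^{1-\eta}/[k_{\rm tor}(Y):k_{\rm tor}]^{1+\eta}$. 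Comparing, the exponents of $\deg B$ are $1$ versus $1-\eta$: they do not separate, and no bound on $\deg B$ follows. Consequently none of the displayed bounds in (1)--(3) can be reached along this route.

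The missing idea, and the crux of the proof in \cite{TAI} sketched in the paper, is not to intersect $V$ with $B$ but with a strictly larger auxiliary torsion variety $B'\supset B$ obtained by keeping only $\dim V-\dim Y$ of the $\codim B$ equations defining $B$ (chosen, via Minkowski, to be those of smallest degree). One then checks that $Y$ is still an irreducible component of $V\cap B'$, that $h(B')=0$, and that
\[
\deg B'\;\ll\;(\deg B)^{\frac{\dim V-\dim Y}{\codim B}}.
\]
The anomaly inequality $\codim Y<\codim V+\codim B$ is precisely the statement that the exponent $(\dim V-\dim Y)/\codim B$ is strictly less than $1$. Applying Arithmetic B\'ezout to $V\cap B'$ instead of $V\cap B$ therefore gives the upper bound with $(\deg B)^{(\dim V-\dim Y)/\codim B}$ in place of $\deg B$; now the upper and lower bounds on $\mu(Y)$ do separate in the exponent of $\deg B$, yielding the effective bound on $\deg B$ from which the rest of the argument proceeds essentially as you outline.
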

 %These are only some of the bounds we obtain.

 The method to prove the theorem is  effective, and we give explicit dependence on $V$. This proves the EBHC for relative codimension one.
In \cite{Pacific} we generalise the method to abelian varieties with CM. 

We sketch here the proof which follows  the method described in section 2. This method will be   extended and detailed in section \ref{rangoN}. 

We shall prove that there are only finitely many $B$ which give a $V$-torsion anomalous varieties of relative codimension one. To this aim it is enough to bound the degree of $B$.
Let $Y$ be a maximal $V$-torsion anomalous variety of relative codimension one. Then $Y$ is a component of 
$ V \cap B$ with $B$ minimal for $Y$.

\begin{itemize}
\item[(a)] Use    the Arithmetic B\'ezout Theorem and the Zhang inequality to get an upper bound for the essential minimum of $Y$.

The  
   {\bf Arithmetic B\'ezout Theorem} states that for 
$Z_1, \dots , Z_g$  irreducible components of $V\cap W$ with $V$ and $W$ irreducible projective varieties embedded in $\mathbb{P}^m$, we have 
$$ \sum_{i=1}^g h(Z_i)\le \deg V h(W) +\deg W h(V) +c \deg V \deg W,$$
where $c$ is a constant depending explicitly on $m$.

To apply this in our setting, we construct a torsion variety $B'$, obtained by deleting some of the equations defining $B$, such that  $Y$ is a component of $V\cap B'$, $\deg B' \le (\deg B)^{\frac{\dim V-\dim Y}{\codim B}}$ and $h(B')=0$. Let $k$ be a field of definition  for $V$ and for any abelian subvariety of $E^N$ (such a $k$ depends only on the field of definition of $V$ and $E$). Then, also all the  conjugates of $Y$ over $k_{\rm tor}$ are  irreducible components of $V\cap B'$. Let $S$ be the number of such conjugates. The Arithmetic B\'ezout Theorem gives
\begin{equation}\label{patrice}
\begin{split}S h(Y)&\le  \deg V \,\,h(B') + \deg B' \left(h(V) + c \deg V\right)\\
& \le  (\deg B)^{\frac{\dim V-\dim Y}{\codim B}} \left(h(V) + c \deg V\right).\end{split}\end{equation} 

The {\bf Zhang inequality} states that   $${(\dim Y+1)^{-1}} \frac{ h(Y)}{\deg Y}\le \mu(Y)\le  \frac{ h(Y)}{\deg Y}.$$ So $\deg Y \mu(Y) \le h(Y)$ which, together with (\ref{patrice}),  gives  
$$S\deg Y \mu(Y)\le  \left(h(V) +c \deg V\right)\deg B^{\frac{\dim V-\dim Y}{\codim B}}.$$

\item[(b)] If $Y$ has positive dimension and it is not a translate, use the Bogomolov lower bound for $\mu(Y)$  and if  $Y$ is a point, use the Lehmer lower bound for $\mu(Y)=h(Y)$. In {\bf relative codimension one}, these bounds give respectively
$$c_2\frac{(\deg B)^{1-\eta}}{(\deg Y)^{1+\eta}}\le \mu(Y)$$ in positive dimension; and
$$c'_2\frac{(\deg B)^{1-\eta}}{[k_{\rm tor}(Y):k_{\rm tor}]^{1+\eta}}\le \mu(Y)$$ in dimension zero. The case of a translate can be reduced to the zero-dimensional case.
  \end{itemize}
  Parts (a) and (b) imply 
  $$\deg B ^{1-\eta}\le c_3 (h(V)+\deg V)\deg B^{\frac{\dim V-\dim Y}{\codim B}}.$$
  By the definition of $V$-torsion anomalous
\begin{equation}
\label{stella}N-\dim Y<N-\dim V+N-\dim B.
\end{equation} Thus $${\frac{\dim V-\dim Y}{\codim B}}<1$$ and  $$\deg B\le c(V),$$ with $c(V)$ a constant depending on $V$. Consequently there are only finitely many $V$-torsion anomalous varieties of relative codimension one and their height is effectively bounded using, for instance, the bound for $\deg B$ in  (\ref{patrice}).
 
Note that if   $\dim V=N-2$, then (\ref{stella}) implies $\dim B=\dim Y+1$. So for $V$ of codimension $2$ all $V$-torsion anomalous varieties have relative codimension one, and  the above proof covers all cases.

\medskip

In spite of the fact that the method above is  effective,  the use of a Lehmer type bound  is a deep obstacle  when trying to make the result explicit. Moreover such a bound  is not known in the non CM case.

In \cite{Trans} we use a different method which avoids the use of a Lehmer type bound, and so works for the CM and non CM case. Our method is completely effective and it proves new cases of the EBHC. We prove 
 that,   in a product of elliptic curves, the maximal $V$-torsion anomalous points of relative codimension one have effectively bounded height. An important advantage of this method is that we manage to make it explicit. In the non CM case,  we  compute all constants and we obtain the only known explicit bounds. Even if the result is only for points, the bounds are very strong, in the sense that the bounds depend uniformly on the degree and height of $V$, but are independent of the field of definition of $V$. This independence is crucial for some applications.  In \cite[Theorem 1.1]{Trans}  we prove the following theorem.

\begin{thm}\label{transmain}
 Let $V$ be an irreducible variety embedded in $E^N$.
Then the set of maximal  $V$-torsion anomalous points of relative codimension one has effectively bounded N\'eron-Tate height. If $E$ {is non} CM the bound is explicit, 
we have
\[
\hat h(P)\leq C_1(N)h(V)(\deg V)^{N-1}+C_2(E,N)(\deg V)^{N}+C_3(E,N),
\]
where 
\begin{align*}
C_1(N)&=(N!)^{N} N^{3N-2} \left( \frac{3^{N^2+N+1}2^{2N^2+3N-1}(N+1)^{N+1}}{(\omega_N \omega_{N-1})^2} \right)^{N-1}\\
C_2(E,N)&=C_1(N)\left(\frac{3^N \log 2}{2}+12 N \log 2+N\log 3+6N h_{\mathcal W}(E)\right)\\
C_3(E,N)&=\frac{7N^2}{6}\log 2+\frac{N^2}{2}h_{\mathcal W}(E),
\end{align*}
 $ \omega_r=\pi^{r/2}/\Gamma(r/2+1)$ is the volume of the euclidean unit ball in $\mathbb{R}^r$, $h(V)$ is the  normalised  height  of $V$ and $h_{\mathcal W}(E)$ is the height of the Weierstrass equation of $E$ (if $E$ is given by $y^2=x^3+Ax+B$ then $h_{\mathcal W}(E)$ is the Weil height of the point $(1:A^{\frac{1}{2}}:B^{\frac{1}{3}})$).
\end{thm}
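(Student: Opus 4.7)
The plan is to follow the same two-step scheme sketched in section \ref{BHC} for Theorem \ref{codimensionerelativauno}, but to replace the Lehmer-type lower bound of step (b), which is unavailable in the non-CM case (and intrinsically introduces a dependence on $[k_\mathrm{tor}(P):k_\mathrm{tor}]$), by a purely geometric bound on $\deg B$. This is what will let the final estimate be both explicit and independent of the field of definition of $V$.

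First I would fix a maximal $V$-torsion anomalous point $P$ of relative codimension one. By definition, there exists a one-dimensional torsion translate $B = \zeta + H \subset E^N$, minimal for $P$, such that $P$ is an isolated component of $V \cap B$; here $H$ is a one-dimensional abelian subvariety given, in the non-CM case, by a primitive integer vector $(n_1,\dots,n_N)$ whose degree satisfies $\deg H \asymp n_1^2+\cdots+n_N^2$. Since $B$ is a torsion variety, $h(B)=0$. Applying the Arithmetic B\'ezout Theorem of P. Philippon to $V\cap B$, summing over the $S$ conjugates of $P$ over a field $k$ of definition of $V$ and of all abelian subvarieties of $E^N$, and invoking Zhang's inequality (which is trivial for a point, giving $\mu(P)=\hat h(P)$), I obtain
\[
S\,\hat h(P)\;\le\;\deg V\cdot h(B)+\deg B\bigl(h(V)+c_N\deg V\bigr)\;=\;\deg B\,\bigl(h(V)+c_N\deg V\bigr),
\]
with $c_N$ an explicit constant depending only on the ambient projective dimension. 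The exponents in the target bound suggest the geometric goal $\deg B\ll (\deg V)^{N-1}$.

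The main obstacle, and the genuinely new step, is to bound $\deg B$ effectively by a function of $\deg V$ alone, without invoking a Lehmer-type estimate on $\hat h(P)$. The strategy I would adopt exploits the anomaly itself together with a Bogomolov-type bound applied not to $P$ but to a positive-dimensional auxiliary variety that one can construct canonically out of $V$ and the one-dimensional subgroup $H$. Concretely, for every one-parameter abelian subvariety $H\hookrightarrow E^N$, one considers the quotient isogeny $\pi_H:E^N\to E^N/H$ and the projection $\pi_H(V)$; since $P$ is an isolated intersection point on the torsion translate $B$ of $H$, the fibre of $\pi_H$ over $\pi_H(P)$ meets $V$ in a zero-dimensional set, so $\pi_H(V)$ has the same dimension as $V$ and its degree and height can be controlled by the B\'ezout/Philippon machinery in terms of $\deg V$, $h(V)$ and $\deg H$. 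Since $V$ is not covered by translates of $H$ (otherwise $P$ would not be maximal of relative codimension one), $\pi_H(V)$ is not a translate, and a Bogomolov-type bound in $E^N/H$ gives a positive lower bound for $\mu(\pi_H(V))$ that grows with $\deg H=\deg B$. Comparing this with the upper bound on $\mu(\pi_H(V))$ obtained by pushing forward the B\'ezout estimate yields an explicit inequality of the form $\deg B\ll (\deg V)^{N-1}$, and substituting back into the displayed B\'ezout inequality produces the stated bound on $\hat h(P)$.

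The hard part is this last step, that is, extracting a bound on $\deg B$ that is polynomial in $\deg V$ and independent of the field of definition: one must track the constants in the Bogomolov-type inequality in $E^N/H$ carefully, use the non-CM structure to pass from powers to products of elliptic curves (so that subgroups are described by integer matrices with many vanishing entries), and compute the dependence on $h_{\mathcal W}(E)$ to arrive at the constants $C_1(N), C_2(E,N), C_3(E,N)$ in the statement. The volume constants $\omega_N,\omega_{N-1}$ enter through a Minkowski-type lattice-point count on the primitive vectors $(n_1,\dots,n_N)$ parametrising one-dimensional subgroups of bounded degree. Once $\deg B$ is explicitly bounded, inserting it into the B\'ezout inequality and simplifying gives the closed-form bound for $\hat h(P)$, completing the proof.
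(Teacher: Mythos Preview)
Your proposal departs from the paper's argument at the crucial step, and the substitute you suggest does not work as stated.

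The paper (which only sketches the proof, referring to \cite{Trans}) does \emph{not} attempt to bound $\deg B$ for the minimal torsion variety $B$. Instead it replaces $B$ altogether by an \emph{auxiliary translate} $H+P$, where $H$ is an abelian subvariety chosen via geometry of numbers (this is where the volumes $\omega_N,\omega_{N-1}$ enter, through Minkowski's second theorem) so that $P$ is still an isolated component of $V\cap(H+P)$, and so that both $\deg(H+P)$ and $h(H+P)$ are controlled in terms of $\hat h(P)$ itself. Note that $H+P$ is \emph{not} a torsion variety, so $h(H+P)\neq 0$; its height is essentially $\hat h(P)\cdot\deg H$. Applying the Arithmetic B\'ezout Theorem to $V\cap(H+P)$ then yields an inequality of the shape $\hat h(P)\le F\bigl(h(V),\deg V,\hat h(P)\bigr)$, and the careful choice of $H$ makes this a \emph{feedback inequality} that can be solved for $\hat h(P)$, giving the explicit bound.

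Your alternative, bounding $\deg B$ directly by applying a Bogomolov-type bound to $\pi_H(V)$ in $E^N/H$, has a gap. The Bogomolov lower bound for $\mu(\pi_H(V))$ has the form $c\,(\deg \pi_H(V))^{-1/\codim}$ (possibly with a numerator involving the degree of the minimal translate containing $\pi_H(V)$), and after composing with an isogeny $E^N/H\to E^{N-1}$ of degree $\asymp\deg H$ and working on $E^{N-1}$ with its standard polarization, the resulting lower bound does \emph{not} grow with $\deg H$ in a usable way; meanwhile the degree of $\pi_H(V)$ itself picks up a factor of $\deg H$, which goes the wrong direction. There is no evident Zhang-type upper bound on $\mu(\pi_H(V))$ depending only on $V$ to play off against this, since projection does not decrease essential minima in a controlled fashion. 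In short, the comparison you propose does not isolate $\deg B$, and the intermediate goal $\deg B\ll(\deg V)^{N-1}$ is neither what the paper proves nor, as far as one can see, true in general. The shape of the final bound comes from solving the feedback inequality, not from an a priori bound on $\deg B$.
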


The proof of  this theorem   relies on a more classical approximation process in the context of the geometry of numbers. Let $P$ be a point as in Theorem \ref{transmain}; in particular $P$ is a component of $V\cap B$ for some torsion  variety $B$. We replace $B$ with an auxiliary translate of the form $H+P$ in such a way that $P$ is still a component of $V\cap (H+P)$, and the degree and height  of $H+P$ can be controlled in terms  of $\hat h(P)$.
Using the properties of the height functions and the Arithmetic B\'ezout Theorem, we can in turn control the height of $P$ in terms of the height and degree of $H+P$ itself. Combining carefully these inequalities leads to the desired result.

This construction is not difficult, but when making the constants explicit  we must  do deep computations related to several height functions and to the degrees of varieties  embedded  in a projective space. In order to keep the constants as small as possible, we must  adapt and simplify several classical arguments.

\section {Applications to the   Mordell-Lang Conjecture}\label{sezML}

 The toric  version  of the Mordell-Lang Conjecture has been extensively studied, also in its effective form, by many authors. A completely effective version in the toric case can be found in the book of E. Bombieri and W. Gubler \cite[Theorem 5.4.5]{BiGi}. However this says nothing about the $k$-rational points of a variety, because the $k$-points are not a finitely generated subgroup in a torus. The Mordell -Weil Theorem shows that the $k$-points of an abelian variety are a finitely generated group. But unfortunately an effective Mordell-Lang Conjecture in abelian varieties is not known. The method by C. Chabauty and R. Coleman, surveyed, for example, by W. McCallum and B. Poonen in  \cite{MP}, gives a sharp quantitative Mordell-Lang statement for a curve in its Jacobian and $\Gamma$ of rank less than the genus; it involves a Selmer group calculation and a good understanding of the Jacobian of the curve. In some special cases, the bounds are sharp, this allows an inspection that leads to finding all the points in the intersection of the curve with $\Gamma$ (see W. McCallum \cite{McCallum}).
The method by Y. Manin and A. Demjanenko, described in Chapter 5.2 of the book of J.P. Serre  \cite{serreMWThm}, gives an effective Mordell Theorem for curves with many independent morphisms to an abelian variety. As remarked by J.P. Serre, the method remains of difficult application.

In this section we discuss  several implications of our bounds on the height to the effective Mordell-Lang Conjecture in the elliptic case. In particular we would like to underline that with our method  the bounds are totally  uniform in $h(V)$ and $\deg(V)$, thus it is very simple to give examples of curves with effective, and even explicit bounds for the height of their  rational points. We will give some new explicit examples in the next section. This is a major advantage over  the just mentioned effective classical methods.

\medskip

Let $E$ be an elliptic curve defined over the algebraic numbers.
% If $E$ is defined over a field $k$, then $k_\mathrm{tor}$ is the field of definition of all torsion points of $E$. 
 Let $\Gamma$ be a subgroup of $E^N$, we denote by $\overline {\Gamma}$  the group  generated by all coordinates  of the points in $\Gamma$ as an ${\mathrm{End}}(E)$-module. 
 The height of a set is as usual the supremum of the heights of its points.

For simplicity, we write MLC for the Mordell-Lang Conjecture.

\subsection*{Consequences of Theorem \ref{codimensionerelativauno} on the effective MLC} 

 The following theorem is proven in \cite[Corollaries 2.1 and 2.2]{TAI} . It is a consequence of the main Theorem of \cite{TAI}, stated here as Theorem \ref{codimensionerelativauno}.
\begin{thm}
\label{ML1}
Let $E$ be a CM elliptic curve and $\Gamma$ be a subgroup of $E^N$, where $N\ge3$. Let $\Ci$ be a weak-transverse curve in  $E^N$. Assume that $\overline{\Gamma}$ has rank one as an ${\mathrm{End}}(E)$-module. Then, for any positive real $\eta$, there exists a constant $c_1$, depending only on $E^N$ and $\eta$, such that  the set $\Ci\cap \Gamma $  has N\'eron-Tate height bounded  as
$$\hat{h}(\Ci\cap \Gamma)  \le c_1 (h(\Ci)+\deg \Ci)^{\frac{N-1}{N-2}+\eta}[k_{\mathrm{tor}}(\Ci):k_{\mathrm{tor}}]^{\frac{1}{N-2}+\eta}.$$ 

Let $N=2$ and let $\Ci$ be a transverse curve in $E^2$. Assume $\overline{\Gamma}$ is generated by $g$ as an ${\mathrm{End}}(E)$-module. Then for any positive real $\eta$ there exists a constant $c_2$ depending only on $E^N$ and $\eta$, such that  the set $\Ci\cap \Gamma $  has N\'eron-Tate height bounded  as
$$\hat{h}(\Ci\cap \Gamma)  \le c_2[k_\mathrm{tor}(\Ci\times g):k_\mathrm{tor}]^{1+\eta}(h(\Ci)+(\hat{h}(g)+1)\deg \Ci)^{2+\eta}.$$

\end{thm}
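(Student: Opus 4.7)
Both assertions follow the same pattern: exploit the rank-one hypothesis on $\overline\Gamma$ to show that each $P\in\Ci\cap\Gamma$ yields a maximal torsion anomalous point of relative codimension one for a well-chosen weak-transverse curve, and then invoke part~(3) of Theorem~\ref{codimensionerelativauno} directly.

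Fix a generator $g$ of $\overline\Gamma$ as an $\emor(E)$-module. If $g$ is torsion, then $\Gamma\subset\mathrm{Tor}_{E^N}$, and by the Manin--Mumford conjecture applied to the weak-transverse (resp. transverse) curve $\Ci$ the set $\Ci\cap\Gamma$ is finite and of height zero; we may therefore assume that $g$ is non-torsion. For the first assertion ($N\ge 3$), every non-identity $P=(a_1g,\dots,a_Ng)\in\Ci\cap\Gamma$ (with $a_i\in\emor(E)$) lies on the one-dimensional algebraic subgroup $B=\{(a_1x,\dots,a_Nx):x\in E\}\subset E^N$. Since $N\ge 3$, the subgroup $B$ is proper and $\dim B=1\le N-2$, so $\{P\}$ is a component of $\Ci\cap B$ with $\dim\{P\}=0>1+\dim B-N$; hence $P$ is a $\Ci$-torsion anomalous point of relative codimension one. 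Weak-transversality of $\Ci$ rules out any positive-dimensional $\Ci$-torsion anomalous variety, so $\{P\}$ is automatically maximal. Applying Theorem~\ref{codimensionerelativauno}(3) with $V=\Ci$ and $\dim V=1$ produces exactly the claimed bound, since $(N-1)/(N-\dim V-1)=(N-1)/(N-2)$ and $\dim V/(N-\dim V-1)=1/(N-2)$.

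For the second assertion ($N=2$), the same construction inside $E^2$ fails, because the one-dimensional $B$ then fails to be of codimension $\ge 2$. Instead, I would lift the problem to $E^3$ by considering the auxiliary curve $V'=\Ci\times\{g\}$. Since $\Ci$ is transverse in $E^2$ and $g$ is non-torsion, $V'$ is weak-transverse in $E^3$: a defining equation $m_1x_1+m_2x_2+m_3x_3=0$ of a proper algebraic subgroup containing $V'$ would force $m_1=m_2=0$ by transversality of $\Ci$, and then $m_3g=0$ with $g$ non-torsion forces $m_3=0$; a translate $+\zeta$ by a torsion point is handled by the same argument after translation, using weak-transversality of the translate of $\Ci$. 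For $P=(a_1g,a_2g)\in\Ci\cap\Gamma$, the point $(P,g)\in V'$ lies on the one-dimensional algebraic subgroup $\{(a_1x,a_2x,x):x\in E\}\subset E^3$, providing torsion anomaly of relative codimension one; maximality holds as before. Theorem~\ref{codimensionerelativauno}(3) with ambient dimension $3$ and $\dim V'=1$ yields
\[
\hat h(P,g)\ll_\eta (h(V')+\deg V')^{2+\eta}[k_{\mathrm{tor}}(V'):k_{\mathrm{tor}}]^{1+\eta}.
\]
Standard product estimates give $\deg V'\ll\deg\Ci$ and $h(V')\ll h(\Ci)+\hat h(g)\deg\Ci+\deg\Ci$, together with $k_{\mathrm{tor}}(V')=k_{\mathrm{tor}}(\Ci\times g)$; since $\hat h(P)\le\hat h(P,g)$, this transfers to the bound asserted in the theorem.

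The main technical obstacle lies in Part~2, namely the verification that $V'=\Ci\times\{g\}$ is weak-transverse in $E^3$ and the explicit control of the normalised height and degree of the product $V'$ in terms of $h(\Ci)$, $\deg\Ci$ and $\hat h(g)$. Neither step is deep, but one has to fix the projective embedding of $E^3$ and the normalisation of $h$ carefully so that the product-height inequality produces exactly the factor $h(\Ci)+(\hat h(g)+1)\deg\Ci$ demanded in the statement. Once these two product-type estimates are in place, the rest is a direct invocation of Theorem~\ref{codimensionerelativauno}.
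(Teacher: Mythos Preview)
Your proposal is correct and follows essentially the same route as the paper. The paper does not spell out a proof here but states that the result is a consequence of Theorem~\ref{codimensionerelativauno} (the main theorem of \cite{TAI}); the $N=2$ reduction via the auxiliary curve $\Ci\times\{g\}$ in $E^{3}$ is precisely the device used in the paper's proof of the closely related Theorem~\ref{MLtre}, including the estimates $\deg(\Ci\times g)=\deg\Ci$ and $h(\Ci\times g)\le 2(h(\Ci)+\hat h(g)\deg\Ci)$ via Zhang's inequality.

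One small point: when you write $P=(a_1g,\dots,a_Ng)$ you are tacitly assuming $\overline\Gamma$ is torsion-free, but rank one as an $\mathrm{End}(E)$-module only controls the free part, so in general each coordinate is $a_ig+\zeta_i$ with $\zeta_i$ torsion. This does not affect your argument---$P$ then lies on a torsion translate $B+\zeta$ of the same one-dimensional $B$, which is still a torsion variety, so the relative-codimension-one conclusion and the appeal to Theorem~\ref{codimensionerelativauno}(3) go through unchanged.
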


\subsection*{Consequences of Theorem  \ref{transmain} on the effective MLC}
In \cite[Theorem 1.3]{Trans}  we  give the following application of Theorem \ref{transmain}.  We can immediately remark the nice improvement on the previous result: this bound does not depend neither on the generator $g$ of $\overline{\Gamma}$ nor on the field of definition of $\Ci$. In addition the dependence on $\eta$ disappears. In the non CM case the constants are explicit. The method can be made explicit also in the  CM case (see \cite{SFI}).

\begin{thm}
\label{transrangouno}
Let $E$ be an elliptic curve.
Let  $N\ge3$ and $\Ci$ be a weak-transverse curve in  $E^N$.  Assume $\Gamma$ is a subgroup of $E^N$ such that $\overline{\Gamma}$   has rank one as an ${\mathrm{End}}(E)$-module. Then,  the set $\Ci\cap \Gamma $  has N\'eron-Tate height effectively bounded.  If $E$ is non CM, we have that  
\begin{align*}
 \hat h(\Ci\cap \Gamma)\leq C_1(N)h(\Ci)(\deg \Ci)^{N-1}+ C_2(E,N) (\deg \Ci)^{N}+C_3(E,N),
 \end{align*}
 where $C_1(N), C_2(E,N), C_3(E,N)$ are the same as in Theorem \ref{transmain}.

Let $N=2$ and  let $\Ci$ be a transverse curve in $E^2$. Then  the set $C\cap \Gamma $  has N\'eron-Tate height  effectively bounded.  If $E$ is non CM, we have that
 \begin{align*}
 \hat h(\Ci \cap \Gamma)\leq D_1h(\Ci)(\deg \Ci)^{2}+ D_2(E) (\deg \Ci)^{3}+D_3(E),
 \end{align*}
where
\begin{align*}
  D_1&=\frac{2^{64}3^{40}}{\pi^{8}} &\approx& 2.364\cdot 10^{34}\\
    D_2(E)&=\frac{2^{62}3^{41}}{\pi^{8}}\left(71\log 2+4\log 3+30h_{\mathcal W}(E)\right) &\approx& \left(5.319 \cdot h_{\mathcal W}(E) +9.504\right)\cdot 10^{35}\\
  D_3(E)&=\frac{9}{2}h_{\mathcal W}(E)+\frac{21}{2}\log 2 &\approx& 4.5\cdot h_{\mathcal W}(E)+7.279.
\end{align*}
In particular, in both cases, if $k$ is a field of definition for $E$ and $E(k)$ has rank 1, then all points in $\mathcal{C}(k)$  have N\'eron-Tate height effectively bounded as above. 

\end{thm}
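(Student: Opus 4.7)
The plan is to reinterpret every point of $\Ci \cap \Gamma$ as a maximal $\Ci$-torsion anomalous point of relative codimension one in $E^N$, so that Theorem \ref{transmain} applies directly and contributes precisely the claimed constants $C_1(N), C_2(E,N), C_3(E,N)$.

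First suppose $N \ge 3$. Choose $g \in E$ so that $\overline{\Gamma} \subseteq \mathrm{End}(E)\cdot g + \mathrm{Tor}_E$. A point $P = (P_1, \ldots, P_N) \in \Ci \cap \Gamma$ then has each $P_i = \phi_i(g) + t_i$ with $\phi_i \in \mathrm{End}(E)$ and $t_i \in \mathrm{Tor}_E$, so $P$ lies on the one-dimensional torsion variety
\[
B = \{(\phi_1(x), \ldots, \phi_N(x)) : x \in E\} + (t_1, \ldots, t_N).
\]
Weak-transverseness of $\Ci$ forbids $\Ci \subseteq B$, so $P$ is isolated in $\Ci \cap B$. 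Counting codimensions gives $\codim_{E^N} P = N < 2(N-1) = \codim \Ci + \codim B$, valid precisely for $N \ge 3$; hence $P$ is $\Ci$-torsion anomalous with relative codimension $\dim B - \dim P = 1$. Maximality is immediate, since the only irreducible subvariety of $\Ci$ of strictly larger dimension than $P$ is $\Ci$ itself, which is not $\Ci$-torsion anomalous by the weak-transverse hypothesis. Theorem \ref{transmain} now yields the stated bound.

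For $N = 2$ the codimension count gives equality, $\codim_{E^2} P = 2 = \codim \Ci + \codim B$, so $P$ fails to be torsion anomalous in $E^2$ and Theorem \ref{transmain} cannot be invoked verbatim. Instead one runs the proof technique of Theorem \ref{transmain} directly inside $E^2$: build a one-dimensional auxiliary translate $H + P \subset E^2$ containing $P$ whose degree and N\'eron--Tate height are controlled in terms of $\hat h(P)$, then combine the Arithmetic B\'ezout Theorem applied to $\Ci$ and $H+P$ with the Zhang inequality. The rank-one hypothesis on $\overline{\Gamma}$ enters by bounding the endomorphisms $\phi_1, \phi_2$ describing the candidate translate. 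Keeping every intermediate constant explicit is the main difficulty of the proof; it is what produces the separate constants $D_1, D_2(E), D_3(E)$ rather than a naive specialisation of $C_1, C_2, C_3$ at $N = 2$, and it is where the streamlined height-function estimates developed for Theorem \ref{transmain} are reused.

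Finally, the rank-one specialisation in the last sentence of the theorem is immediate: if $k$ is a field of definition of $E$ and $E(k)$ has rank one, take $\Gamma := E(k)^N$; then $\overline{\Gamma}$ satisfies the rank-one hypothesis over $\mathrm{End}(E)$ and $\Ci(k) \subseteq \Ci \cap \Gamma$ inherits the same height bound.
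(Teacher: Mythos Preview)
Your reduction for $N \ge 3$ is correct and is exactly the intended ``application of Theorem~\ref{transmain}'' that the paper (deferring to \cite{Trans}) has in mind: a point of $\Ci\cap\Gamma$ lies on a one-dimensional torsion variety, hence is a maximal $\Ci$-torsion anomalous point of relative codimension one, and Theorem~\ref{transmain} gives the bound with the stated constants. Two small points you glossed over: first, if $P$ is torsion (all $\phi_i=0$) your $B$ collapses to a point and the argument breaks, but then $\hat h(P)=0$ and the bound is trivial; second, relative codimension is defined via the \emph{minimal} $B$, and you should note that for non-torsion $P$ the rank-one hypothesis forces the minimal torsion variety through $P$ to have dimension exactly one, so your $B$ really is minimal.

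Your diagnosis of the $N=2$ case is also correct: the codimension count becomes an equality, so Theorem~\ref{transmain} is vacuous for curves in $E^2$ and one must rerun its proof (auxiliary translate $H+P$, Arithmetic B\'ezout, height comparison) directly, using transversality of $\Ci$ in place of weak-transversality to guarantee that $P$ is isolated in $\Ci\cap(H+P)$. This is indeed what \cite{Trans} does, and it is why the $N=2$ constants $D_i$ are not the specialisations $C_i(2)$ and are free of $g$ (unlike the embedding trick $\Ci\mapsto\Ci\times g\subset E^3$, which would reintroduce $\hat h(g)$ as in Theorem~\ref{ML1}). Your sketch here is accurate at the level of the present paper, which itself gives no further detail; just be aware that the sentence ``the rank-one hypothesis enters by bounding the endomorphisms $\phi_1,\phi_2$'' is slightly off---the hypothesis is used to place $P$ on a one-dimensional torsion variety so that the geometry-of-numbers construction of $H$ can start, not to bound the $\phi_i$ themselves.
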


\section{An Explicit Bound for the height of the rational points on a new family of curves}

As an application of Theorem \ref{transrangouno},  we give  in \cite{Trans} the following  explicit bound for the height of the rational points on   a  specific family of curves of increasing genus.

Let $E$ be the elliptic curve defined by the equation $y^2=x^3+x-1$; the group $E(\Q)$ has rank 1 with generator $g=(1,1)$. We write 
\begin{align*}
y_1^2=x_1^3+x_1-1\\
y_2^2=x_2^3+x_2-1
\end{align*}
for the equations of $E^2$ in $\P_2^2$, using affine coordinates $(x_1,y_1)\times (x_2,y_2)$. Consider the family of curves $\{\Ci_n\}_n$ with $\Ci_n\subseteq E^2$ defined via the additional equation $$x_1^n=y_2.$$
In \cite[Theorem 1.4]{Trans}  we prove:
\begin{thm}\label{teoremacurveCn}
For every $n\geq 1$ and the just defined curves $\Ci_n$ we have 
\[\hat{h}(\Ci_n(\mathbb{Q)})\leq 8.253\cdot 10^{38} (n+1)^3.\]

\end{thm}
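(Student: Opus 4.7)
The strategy is to apply the $N=2$ case of Theorem~\ref{transrangouno} to the (components of the) curve $\Ci_n \subset E^2$, with $E: y^2=x^3+x-1$ and $\Gamma = E(\Q)^2 \supseteq \Ci_n(\Q)$. The $j$-invariant of $E$ is $1728 \cdot 4/31$, which is not among the finitely many rational CM $j$-invariants, so $E$ is non-CM and $\mathrm{End}(E)=\Z$; since $E(\Q)=\Z g$ with $g=(1,1)$, the submodule $\overline{\Gamma}$ has rank one as an $\mathrm{End}(E)$-module, matching the hypothesis of Theorem~\ref{transrangouno}. The explicit bound in that theorem will produce a cubic polynomial in $\deg \Ci_n$ (with the coefficient of $(\deg \Ci_n)^3$ controlled by $D_2(E)$), so the target bound $\asymp (n+1)^3$ will follow once $\deg \Ci_n$ and $h(\Ci_n)$ are seen to grow linearly in $n$.

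\textbf{Verifying the hypotheses.} First I would check that each irreducible component of $\Ci_n$ is transverse. The positive-dimensional proper algebraic subgroups of $E^2$ are, up to swapping factors, the graphs $\{(P,[m]P):P\in E\}$ for $m\in\Z$, and a translate of such a graph projects onto the first factor with degree one. On $\Ci_n$, however, the equation $y_2=x_1^n$ together with $y_2^2=x_2^3+x_2-1$ forces $x_2$ to be one of the three roots of $T^3+T-1-x_1^{2n}$, so the projection $\Ci_n\to E$ onto the first factor has degree three. Hence no component of $\Ci_n$ is contained in any translate of a proper subgroup, i.e.\ each is transverse. Second, I would record the explicit Weierstrass height: with $A=1$ and $B=-1$, the quantities $A^{1/2}=\pm1$ and $B^{1/3}$ are roots of unity, so $h_{\mathcal W}(E)=h(1:1:-1)=0$, which simplifies the constants $D_2(E)$ and $D_3(E)$ of Theorem~\ref{transrangouno}.

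\textbf{Bounding $\deg \Ci_n$ and $h(\Ci_n)$, and concluding.} Under the Segre embedding $E^2\subset \P^2\times\P^2\hookrightarrow\P^8$, a bidegree computation gives $\deg\Ci_n=a+b$ where $a$ and $b$ count intersections with $\{L\}\times\P^2$ and $\P^2\times\{L\}$ respectively: one finds $a=3\cdot 3=9$ (three points of $L\cap E$, each giving three choices of $x_2$) and $b=3\cdot 2n=6n$ (three points of $L\cap E$, each $y_2$ yielding $n$ choices of $x_1$, each with two values of $y_1$), so $\deg\Ci_n\le 6n+9 \le 15(n+1)$. For the normalised height $h(\Ci_n)$ I would use an arithmetic B\'ezout estimate exploiting that all defining equations have coefficients in $\{-1,0,1\}$, so only the archimedean contributions survive, producing a bound of the form $h(\Ci_n)\ll n+1$ with an explicit constant. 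Substituting these bounds into the explicit inequality of Theorem~\ref{transrangouno}, together with $h_{\mathcal W}(E)=0$, yields a bound of the form $C(n+1)^3$; the arithmetic is then a matter of checking that $C\le 8.253\cdot 10^{38}$. The main obstacle I anticipate is precisely this numerical bookkeeping: obtaining an archimedean estimate for $h(\Ci_n)$ sharp enough (and a slightly tighter version of $\deg\Ci_n\le 15(n+1)$, perhaps by directly comparing with $6(n+1)+3$) so that the cumulative constant $D_1\,h(\Ci_n)(\deg\Ci_n)^2+D_2(E)(\deg\Ci_n)^3+D_3(E)$ stays below the target when divided by $(n+1)^3$.
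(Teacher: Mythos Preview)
Your high-level strategy matches the paper's template exactly: verify non-CM and transversality, bound $\deg\Ci_n$ and $h(\Ci_n)$ linearly in $n$, and plug into the $N=2$ case of Theorem~\ref{transrangouno} with $h_{\mathcal W}(E)=0$. (The paper does not reprove this particular statement, which is quoted from \cite{Trans}, but it gives a full proof of the companion result for $E:y^2=x^3-x-2$ ``following the line'' of that proof, so the comparison is with that template.) The differences are in the two technical ingredients.

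For transversality, the paper first checks that $\Ci_n$ is \emph{irreducible} (a primality argument on the defining ideal) and then computes its genus by Hurwitz applied to the map $y_2:\Ci_n\to\P^1$, obtaining genus $>1$; an irreducible curve of genus $>1$ in $E^2$ cannot be a translate. Your projection-degree argument is more elementary but, as written, has a gap: knowing that the \emph{total} projection $\Ci_n\to E$ has degree three does not exclude $\Ci_n$ splitting into three components, each the graph of some $P\mapsto [m]P+Q$ and hence projecting with degree one. (The phrase ``projects with degree one'' also does not literally cover $\{Q\}\times E$, though that case is easy to exclude directly.) You need either irreducibility of $\Ci_n$ or a separate argument ruling out such graph components; the paper's irreducibility-plus-genus route avoids this cleanly.

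For $h(\Ci_n)$ the paper does \emph{not} use an arithmetic B\'ezout estimate. Instead it bounds the essential minimum $\mu(\Ci_n)$ directly: it exhibits the infinite family $Q_\zeta=((x_1,y_1),(\zeta,y_2))\in\Ci_n$ with $\zeta$ a root of unity, bounds $h_2(Q_\zeta)$ explicitly from the defining equations, and then applies Zhang's inequality $h(\Ci_n)\le 2(\deg\Ci_n)\mu(\Ci_n)$. This yields a bound of the shape $h(\Ci_n)\ll(n+1)$ with very small absolute constants, which is precisely what makes the final numerical check go through; your arithmetic-B\'ezout approach would also give a linear bound but, as you anticipated, controlling the archimedean constant sharply enough to land under $8.253\cdot 10^{38}$ is harder that way. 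For the degree, the paper uses the Chow computation $(n\ell+m)(3\ell)(3m)(\ell+m)=9(n+1)(\ell m)^2$ to get $\deg\Ci_n\le 9(n+1)$; your direct bidegree count $\deg\Ci_n=6n+9$ is actually sharper and only helps.
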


Our  explicit results cannot be obtained with the method used in \cite{TAI}. Such kind of  examples are particularly interesting because they give, at least in principle, a method to find all the rational points of the given curves. The bound is unfortunately still too big to be implemented. We are now working to improve the bounds and to find a way to implement the process of testing all the points of such height (see \cite{puntiraz}). This is going to  allow us to find all rational points on this family of curves. In particular, we expect that the only rational points on all the curves in the above family are the points $(1,1)\times (1,1)$ and $(1,-1)\times (1,1)$. 
 
We give here a new family of curves of increasing genus and  we explicitly bound the height of their rational points. This underlines that our method can be easily adapted to new families of curves.

The new example is the following.

Let $E$ be the elliptic curve defined by the Weierstrass equation
\[E: y^2=x^3-x-2.\]
With an easy computation one can check that 
$$h_\mathcal{W}(E)=\frac{\log 2}{3}.$$

%\begin{align*}
%\Delta(E)&=-1664,\\
%j(E)&=-\frac{864}{13},\\
%h_\mathcal{W}(E)&=\frac{\log 2}{3},
%\end{align*}
In particular the curve {is non} CM because $j(E)\not\in \Z$.
Furthermore the group $E(\Q)$ has rank 1 with generator $g=(2,2)$ and no non-trivial torsion points; this can be checked on a database of elliptic curve data  (such as {http://www.lmfdb.org/EllipticCurve/Q}).
%The N\'eron-Tate height of the generator $g$ can be bounded from below, computationally, as
%\begin{equation}\label{hg}
%\hat h(g)\approx 1.066
%\end{equation} (we used dedicated software (PARI/GP) which implements an algorithm with sigma and theta functions due to Silverman).

We write 
\begin{align*}
y_1^2=x_1^3-x_1-2\\
y_2^2=x_2^3-x_2-2
\end{align*}
for the equations of $E^2$ in $\P_2^2$, using affine coordinates $(x_1,y_1)\times (x_2,y_2)$. We consider the family of curves $\{\Ci_n\}_n$ with $\Ci_n\subseteq E^2$ defined via the additional equation $$x_1^n+1=y_2.$$ 

We have the following: 
\begin{thm}\label{teoremacurveCn}
For every $n\geq 1$ and the just defined curves $\Ci_n$ we have 
\[\hat{h}(\Ci_n(\mathbb{Q)})\leq 9.689 \cdot 10^{38} (n+1)^3.\]
\end{thm}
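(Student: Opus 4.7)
The strategy is a direct application of Theorem \ref{transrangouno} with $N=2$. Because $E$ is defined over $\mathbb{Q}$, because $E(\mathbb{Q})$ has rank one (generated by $g=(2,2)$), and because $E$ is non CM (as $j(E)\notin\mathbb{Z}$), the theorem reduces the problem to verifying that each $\Ci_n$ is transverse in $E^2$ and to obtaining explicit upper bounds on $\deg \Ci_n$ and $h(\Ci_n)$. One then substitutes these bounds, together with the value $h_{\mathcal W}(E)=\tfrac{\log 2}{3}$, into the explicit constants $D_1$, $D_2(E)$, $D_3(E)$.

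To establish transversality, I would use that the proper abelian subvarieties of $E^2$ are, up to translation, the elliptic kernels $H_{a,b}=\{(P_1,P_2):aP_1+bP_2=O\}$ for coprime $(a,b)\in\mathbb{Z}^2\setminus\{0\}$. The projections $\Ci_n\to E$ on both factors are dominant (through $x_1$ on the first factor and through $y_2=x_1^n+1$ on the second), so $\Ci_n$ is not contained in a fibre $\{P\}\times E$ or $E\times\{P\}$. For $(a,b)$ with $ab\neq 0$, containment in a translate of $H_{a,b}$ would force a non-trivial algebraic relation coming from the group law between the two elliptic coordinates of $\Ci_n$; this is incompatible with the purely polynomial relation $y_2=x_1^n+1$ and is ruled out by a degree comparison at an infinite branch.

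Next I would bound $\deg \Ci_n$ and $h(\Ci_n)$ in the Segre embedding $\P^2\times\P^2\hookrightarrow\P^8$. The equation $x_1^n+1=y_2$ homogenises to a bihomogeneous form of bidegree $(n,1)$, so intersecting with $E^2$ (bidegree $(3,3)$) and transporting to the Segre embedding yields an explicit linear-in-$n$ bound $\deg \Ci_n \leq c_1 (n+1)$. Philippon's Arithmetic B\'ezout Theorem applied to the same intersection then gives $h(\Ci_n)\leq c_2(n+1)$, the constants $c_1,c_2$ being controlled by $h_{\mathcal W}(E)$, $\log 2$, and the combinatorial constants arising from the Segre map; note that the Mahler measure of $x_1^n+1-y_2$ contributes only $\log 2$ at each archimedean place and nothing at the finite places. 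Feeding these linear estimates into Theorem \ref{transrangouno} produces a bound of shape $C(n+1)^3$.

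The main obstacle is sharpening every intermediate constant so that the final estimate comes out as $9.689\cdot 10^{38}(n+1)^3$. This is a careful but essentially routine optimisation of the degree and height bookkeeping, directly parallel to the computation performed in \cite{Trans} for the family $x_1^n=y_2$; the only genuinely new inputs are the different elliptic curve (hence the new value $h_{\mathcal W}(E)=\tfrac{\log 2}{3}$ entering $D_2(E)$ and $D_3(E)$) and the extra $+1$ in the defining equation, which contributes at most $\log 2$ to the archimedean Mahler measure and thus accounts for the modest increase from $8.253\cdot 10^{38}$ to $9.689\cdot 10^{38}$ in the leading constant.
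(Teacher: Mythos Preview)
Your overall framework is correct and matches the paper's: verify transversality of $\Ci_n$, bound $\deg\Ci_n$ and $h(\Ci_n)$ linearly in $n$, and substitute into Theorem~\ref{transrangouno} with $N=2$ and $h_{\mathcal W}(E)=\tfrac{\log 2}{3}$. The degree bound via the intersection product in the Chow ring is exactly what the paper does, yielding $\deg\Ci_n\le 9(n+1)$.

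However, two of your intermediate steps diverge from the paper's route. For transversality, the paper does not argue directly against containment in translates of the $H_{a,b}$; instead it observes that a non-transverse irreducible curve in $E^2$ must be a translate, hence of genus~$1$, and then computes $g(\Ci_n)=4n+2$ via the Hurwitz formula applied to the degree-$6n$ map $\pi_n\colon\Ci_n\to\P^1$, $\pi_n=y_2$. This is both cleaner and fully rigorous, whereas your ``degree comparison at an infinite branch'' is only a sketch; making it precise would essentially require a ramification analysis anyway.

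For the height, the paper does \emph{not} use the Arithmetic B\'ezout Theorem. It bounds $h(\Ci_n)$ through Zhang's inequality $h(\Ci_n)\le 2\deg\Ci_n\,\mu(\Ci_n)$ by exhibiting an infinite set of points $Q_\zeta=((x_1,y_1),(\zeta,y_2))\in\Ci_n$ with $\zeta$ a root of unity, for which one computes directly $h_2(Q_\zeta)\le \log 18+\tfrac{3\log 24}{2n}$; this gives $h(\Ci_n)\le 18(n+1)\bigl(\log 18+\tfrac{3\log 24}{2n}\bigr)$. This is the same device used in \cite{Trans}, so your claim that an Arithmetic B\'ezout computation would be ``directly parallel'' to \cite{Trans} is not accurate. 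Your route would work in principle, but the additive constant $c$ in Philippon's theorem (depending on the ambient $\P^8$) is larger than what the essential-minimum trick produces, and there is no reason to expect it to land on the specific value $9.689\cdot 10^{38}$; the paper's constant comes from the tighter Zhang-based estimate.
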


\medskip

\begin{proof} The proof follows the line of the proof of \cite[ Theorem 1.4]{Trans}.
Note that the only irreducible curves in $E^2$ which are not transverse are translates, so  curves of genus $1$. Thus, to show that our curves $\Ci_n$ are transverse we show that they are irreducible and of genus $>1$. We then bound the height and degree of the $\Ci_n$  and substitute them in Theorem \ref{transrangouno} getting the desired result.\\

{\bf The irreducibility.}
The irreducibility of the $\Ci_n$ is easily seen to be equivalent to the primality of the ideal generated by the polynomials $y_1^2-x_1^3+x_1+2$ and $(x_1^{n}+1)^2-x_2^3+x_2+2$ in the ring $\qe[x_1,x_2,y_1]$. This follows from an easy argument in commutative algebra. \\

{\bf The genus.}
We shall show  that each $\Ci_n$ has genus at least 2;  in fact we  prove  that $\Ci_n$ has genus $4n+2$.

Consider the morphism $\pi_n:\Ci_n\to\P_1$ given by the function $y_2$. The morphism $\pi_n$ has degree $6n$, because for a generic value of $y_2$ there are three possible values for $x_2$, $n$ values for $x_1$, and two values of $y_1$ for each $x_1$.

Let $\alpha_1,\alpha_2,\alpha_3$ be the three distinct roots of the polynomial $f(T)=T^3-T-2$; let also $\beta_1,\beta_2,\beta_3,\beta_4$ be the four distinct roots of the irreducible polynomial $27T^4+108T^2+104$, which are the values such that $f(T)-\beta_i^2$ has multiple roots.
The $\beta_i$ have degree $4$, and therefore they cannot be equal to $\alpha_j^n+1$, which have degree $3$.
Also for all $n$,   the three numbers $\alpha_j^n+1$ are distinct, otherwise the ratio $\alpha_i/\alpha_j$ would be a root of 1 with degree a divisor of $6$, and all cases are easily discarded.

The morphism $\pi_n$ {is ramified over $\beta_1,\beta_2,\beta_3,\beta_4,1,\alpha_1^n+1,\alpha_2^n+1,\alpha_3^n+1,\infty$.}
{Each of the points $\beta_i$ has} $2n$ preimages of index 2 and $2n$ unramified preimages. The point 1 has 6 preimages ramified of index $n$. The points $\alpha_i^n+1$ have 3 preimages ramified of index 2 and $6n-6$ unramified preimages.
The point at infinity is totally ramified.

By Hurwitz formula
\begin{align*}
2-2g(\Ci_n)&=\deg\pi_n (2-2g(\P_1))-\sum_{P\in \Ci_n}(e_P-1)\\
2-2g(\Ci_n)&=12n-(4\cdot 2n +6(n-1)+3\cdot 3 +6n -1)\\
g(\Ci_n)&=4n+2.
\end{align*}

Thus the family $\{\Ci_n\}_n$ is a family of transverse curves in $E^2$.\\

 {\bf The degree.}
We can compute the degree of $\Ci_n$ as an intersection product. {Let $\ell, m$ be the classes of lines of the two factors of $\P_2^2$ in the Chow group. Then the degree of $\Ci_n$ is bounded by multiplying the classes of the hypersurfaces cut by the equation $x_1^n+1=y_2$, which is $n\ell+m$, by the two Weierstrass equations of $E$, which are $3\ell$ and $3m$, and by the restriction of an hyperplane of $\P_8$, which is $\ell+m$.
In the Chow group
\[(n\ell+m)(3\ell)(3m)(\ell+m)=9(n+1)(\ell m)^2\] 
and then
\begin{equation}\label{A}\deg\Ci_n\le9(n+1).\end{equation}
}\\

 {\bf The normalized height.}

We estimate the  height of $\Ci_n$  using  Zhang's inequality $\mu(X) \le \frac{h(X)}{\deg X} \le (1+\dim X) \mu(X)$ and computing an upper bound for the essential minimum $\mu(\Ci_n)$ of $\Ci_n$.   To this aim, we construct an infinite set of points on $\Ci_n$ of bounded height. By the definition of essential minimum, this gives also an upper bound for $\mu(\Ci_n)$.

Let us recall some definitions. The 
essential minimum of $X$ is defined in the Zhang inequality as
\[
 \mu(X)=\inf\{\theta\in\mathbb{R}\mid\{P\in X\mid h_2(P)\leq\theta\}\text{ is Zariski dense in }X\},
\]
where the $h_2$ is defined as follows.
Let $\mathcal{M}_K$ be the set of  places of a number field $K$.
For a point $P=(P_0:\dotsb :P_m)\in \P_m(K)$ let \begin{equation}\label{defiH2}
h_2(P)=\sum_{v\text{ finite}}\frac{[K_v:\Q_v]}{[K:\Q]}\log \max_i \{\abs{P_i}_v\} +\sum_{v\text{ infinite}}\frac{[K_v:\Q_v]}{[K:\Q]}\log \left(\sum_i \abs{P_i}_v^2\right)^{1/2} 
\end{equation}
be a modified version of the height that differs from the Weil height 
\begin{equation}\label{defiH}
h(P)=\sum_{v\in\mathcal{M}_K}\frac{[K_v:\Q_v]}{[K:\Q]}\log \max_i \{\abs{P_i}_v\}
\end{equation}
at the archimedean places.
These heights are both well-defined,  they extend to $\overline\Q$ and it follows easily from their definitions that
\begin{equation}\label{stima_altez}
h(P)\leq h_2(P)\leq h(P)+\frac{1}{2}\log(m+1).\end{equation}

\medskip

 Let $Q_\zeta=((x_1,y_1), (\zeta,y_2))\in \Ci_n$, where $\zeta \in \overline{\mathbb{Q}}$ is a root of unity. Clearly there exist infinitely many such points on $\Ci_n$. Using the equations of $E$ and $\Ci_n$, we have:
\[h(\zeta)=0, h(y_2)\leq \frac{\log 6}{2}, h(x_1)\leq \frac{\log 24}{2n}, h(y_1)\leq \frac{\log 24}{n}+\frac{\log 6}{2}.\] 
Thus
 \[ h(x_1,y_1)\leq \frac{3\log 24}{2n}+\frac{\log 6}{2}, h(\zeta,y_2)\leq \frac{\log 6}{2}\]
and 
\[ h_2(x_1,y_1)\leq \frac{3\log 24}{2n}+\frac{\log 18}{2}, h_2(\zeta,y_2)\leq \frac{\log 18}{2}.\]
 So for all points $Q_\zeta$ we have
\[h_2(Q_\zeta)=h_2(x_1,y_1)+h_2(\zeta,y_2)\leq \frac{3\log 24}{2n}+\log 18.\]  By the definition of essential minimum, we deduce \[\mu(\Ci_n)\leq \log 18+\frac{3\log 24}{2n}\]
and by Zhang's inequality 
\begin{equation}\label{B} h(\Ci_n)\leq 2\deg\Ci_n\mu(\Ci_n)\le 18(n+1)\left(\log 18+\frac{3\log 24}{2n}\right).\end{equation}
%\item{The canonical height}
%If instead we use the definition of $\hat\mu$ with $\hat h$ we say:
%\[\hat h(x_1,y_1)\leq \frac{h(x_1)}{2}+\frac{h(\Delta)}{12}+\frac{h_\infty(j)}{12}+1.07\leq \frac{\log 24}
%{4n}+\frac{\log 1664}{12}+\frac{\log 864-\log 13}{12}+1.07\leq \frac{\log 24}{4n}+2.038\]
%\[\hat h(\zeta,y_2)\leq \frac{h(\zeta)}{2}+\frac{h(\Delta)}{12}+\frac{h_\infty(j)}{12}+1.07\leq \frac{\log 1664}
%{12}+\frac{\log 864-\log 13}{12}+1.07\leq 2.038\]
 %So for all points $Q_\zeta$ we have
 %\[\hat h(Q_\zeta)=\hat h(x_1,y_1)+\hat h(\zeta,y_2)\leq \frac{\log 24}{4n}+4.076.\]  By the definition of %essential minimum, we deduce \[\hat\mu(\Ci_n)\leq \frac{\log 24}{4n}+4.076\]
%and by Zhang's inequality 
%$\hat h(\Ci_n)\leq 2\deg\Ci_n\hat\mu(\Ci_n)\le 18(n+1)\left(\frac{\log 24}{4n}+4.076\right)$.

We can therefore apply Theorem \ref{transrangouno} with $N=2$ and $h_\mathcal{W}(E)=\frac{\log 2}{3}$ to each $\Ci_n$, which gives, for 
 $P\in \Ci_n(\mathbb{Q})$
 \begin{equation*}\hat h(P)\leq\left(2.364\cdot 10^{34}h(\Ci_n)+1.074\cdot 10^{36}\deg \Ci_n\right)(\deg \Ci_n)^2.\end{equation*}

Substituting  (\ref{A})  and (\ref{B}) for the degree and  height  of the $\Ci_n$ we obtain the theorem. 
\end{proof}

\section{Further results on the Effective and Quantitative MLC} \label{rangoN} In this section we show how the method used in \cite{TAI} can be extended  to    obtain a  more general result  for a curve $C$ weak-transverse in a product of CM elliptic curves.  In \cite[Theorem 1.5]{Pacific}  with $l=1$, $A_1=E$, $g_1=1$  and $e_1=N$ we proved the bound below for the height. Here we give bounds also for  the degree of the field of definition of the torsion anomalous points and for the degree of their minimal torsion varieties. This yields a sharp bound  for the cardinality of our set and it  allows us to get not only   examples of the effective MLC with the rank of $\overline{\Gamma}$  larger than $1$, but also sharp bounds for the quantitative MLC. These applications are clarified in Theorems \ref{MLR}, \ref{MLtre} and \ref{teoremoneML} below. % In a work in progress joint with S. Checcoli and F. Veneziono we are discussing further  generalizations  of our method for curves.  \\

 In what follows, the notations are the same as in \cite{TAI}. For clarity, we recall the well known relation between  algebraic subgroups of $E^N$and matrices with coefficients in ${\rm End}(E)$.
\begin{remark} \label{matrici}

Let $B+\zeta$ be an irreducible torsion variety of $E^N$  of codimension $\codim\sotto=r$ and let $\pi_B:E^N\to E^N/B$ be the natural projection.
Then $E^N/B$ is isogenous to $E^r$; let $\varphi_\sotto:E^N \to E^{r}$ be the composition of $\pi_B$ and this isogeny.

We  associate  $\sotto$ with the morphism $\varphi_\sotto$. Then $\ker \varphi_\sotto=\sotto+\tau$ with $\tau$ a torsion  subgroup whose cardinality is absolutely bounded.
Obviously $\varphi_\sotto$ is identified with a matrix in $\mathrm{Mat}_{r\times N}(\mathrm{End}(E))$ of rank $r$, such that the degree of   $\sotto$  is essentially (up to constants depending only on $N$) the sum of  the squares of the  minors of $\varphi_\sotto$. By {Minkowski}'s theorem,  we can choose the matrix representing $ \varphi_\sotto$ so that the degree of   $\sotto$ is essentially the product of the  squares $d_i$ of the norms  of the  rows of the matrix .

In short  $B+\zeta$ is a component of the torsion variety given as the  zero set of forms $h_1,   \dots , h_r$,  which are the rows of $\varphi_\sotto$,  of degree    $d_i$. In addition  $$d_1  \cdots d_r \ll \deg (B+\zeta)\ll d_1  \cdots d_r  .$$ 
We  {assume} to have ordered the $h_i$  by increasing degree. 
\end{remark}

%We also recall that, as is well known, we can use Siegel's lemma to complete the matrix defining $B$ to a square invertible matrix; this gives a construction for the orthogonal complement $B^\perp$ and shows that $\sharp(B\cap B^\perp)\ll (\deg B)^2$.

 We are ready to  prove the following theorem.

\begin{thm}
\label{curva2}
Let $ \Ci$ be a  weak-transverse curve  in $E^N$, where $E$ has CM. Let $k$ be a field of definition for $E$.  Then the set
$$\mathcal{S}( \Ci)=  \Ci \cap \left(\cup_{\codim H>\dim H} H\right)$$ is a finite set of effectively bounded N\'eron-Tate height and effectively bounded cardinality.
Here $H$ ranges over all algebraic subgroups of codimension larger than the dimension.

More precisely, the set $\mathcal{S}( \Ci)$ can be  decomposed as 
$$\mathcal{S}( \Ci)=  \Ci_{\rm Tor}\cup\bigcup_{r=\lfloor \frac{N}{2}+1\rfloor}^{N-1} \mathcal{S}_r( \Ci),$$ 
where $\Ci_{\rm Tor}$ are the torsion points on $\Ci$, $\lfloor \frac{N}{2}+1\rfloor$ is the integral part of $ \frac{N}{2}+1$,
 $$\mathcal{S}_r( \Ci)=   \Ci \cap \bigcup_{i=1}^{M_r} H_i$$  and 
this union is taken over $M_r$ algebraic subgroups $H_i$ with $\codim H_i=r$. Moreover, for  any real $\eta>0$,  there exist  constants depending only on $E^N$ and $\eta$ such that 
$$M_r\ll_\eta  {(\cC)}^{\frac{r(N-r)N}{2r-N}+\eta}$$
and
$$ \deg H_i \ll_\eta   {(\cC)}^{\frac{r(N-r)(N+2r-2)}{2(r-1)(2r-N)}+\eta} ([k( \Ci):k]\deg  \Ci)^{\frac{Nr}{2(r-1)}+\eta}.$$

Furthermore, if $Y_0\in\mathcal{S}_r( \Ci)$ we have
$$ \hat{h}(Y_0)  \ll_\eta (h( \Ci)+\deg  \Ci)^{\frac{r}{2r-N}+\eta}[k_{\mathrm{tor}}( \Ci):k_{\mathrm{tor}}]^{\frac{(N-r)}{2r-N}+\eta};$$
and $$[k(Y_0):\mathbb{Q}]\ll_\eta ([k( \Ci):k]\deg  \Ci)^{\frac{r}{r-1}+\eta}{(\cC)}^{\frac{r(N-r)}{(2r-N)(r-1)}+\eta};$$
 the cardinality $S_r$ of the points in $\mathcal{S}_r( \Ci)$ is bounded as
$$S_r\ll_\eta [k( \Ci):k]^{c_1}(\deg  \Ci)^{c_1+1+\eta}{(\cC)}^{c_2+\eta},$$
where 

\[ c_1=\frac{rN(2N+1)}{2(r-1)},\]
\[c_2=\frac{r(N-r)(2rN+2r-2+2N^2-N)}{2(2r-N)(r-1)}.\]

\end{thm}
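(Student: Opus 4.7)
The plan is to extend the Bézout-vs.-Lehmer strategy of Section \ref{BHC} (used to prove Theorem \ref{codimensionerelativauno} in relative codimension one) uniformly over every codimension $r \in \{\lfloor N/2\rfloor+1,\ldots,N-1\}$. Given a non-torsion point $Y_0 \in \mathcal{S}(\Ci)$, let $B$ be its minimal torsion variety, of codimension $r$, so that $Y_0$ is automatically a $\Ci$-torsion anomalous point of relative codimension $N-r$. By Remark \ref{matrici} I represent $B$ by an $r\times N$ matrix over $\mathrm{End}(E)$, giving forms $h_1,\dots,h_r$ of non-decreasing degrees $d_1\le\dots\le d_r$ with $\deg B\asymp d_1\cdots d_r$. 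The heart of the argument is an upper bound on $\deg B$ in terms of $h(\Ci)$, $\deg\Ci$ and $[k_{\rm tor}(\Ci):k_{\rm tor}]$; all other assertions of the theorem follow from this.

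For the \emph{upper bound} on $\hat h(Y_0)$, let $B'$ be the hypersurface cut out by the single form $h_1$; then $\deg B'=d_1\le(\deg B)^{1/r}$ and $h(B')=0$. Weak-transversality of $\Ci$ implies $\Ci\not\subset B'$, so $\Ci\cap B'$ is finite and contains $Y_0$ as an isolated component together with all its Galois conjugates over $k_{\rm tor}(\Ci)$. Summing Philippon's Arithmetic Bézout inequality over these conjugates and invoking Zhang's inequality (which for a point is equivalent to $\mu(Y_0)=\hat h(Y_0)$ up to $O(1)$) gives
\[
[k_{\rm tor}(Y_0):k_{\rm tor}(\Ci)]\,\hat h(Y_0)\;\ll\;(\deg B)^{1/r}\,\bigl(h(\Ci)+\deg\Ci\bigr).
\]
For the \emph{lower bound}, since $E^N$ has CM the Lehmer-type inequality of David--Hindry (Conjecture \ref{lemer}, known in the CM case) applied to $Y_0$ and its minimal torsion variety $B$ of dimension $N-r$ yields
\[
\hat h(Y_0)\;\gg_\eta\;\frac{(\deg B)^{\frac{1}{N-r}-\eta}}{[k_{\rm tor}(Y_0):k_{\rm tor}]^{\frac{1}{N-r}+\eta}}.
\]

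Combining the two estimates, the exponent of $\deg B$ is $\frac{1}{N-r}-\frac{1}{r}=\frac{2r-N}{r(N-r)}$, which is \emph{positive} precisely when $2r>N$; this is the origin of the range $r\ge\lfloor N/2\rfloor+1$ in the decomposition of $\mathcal{S}(\Ci)$, and explains why $\codim H>\dim H$ is exactly the hypothesis needed for the method to give content. Solving for $\deg B$, while splitting $[k_{\rm tor}(Y_0):k_{\rm tor}]=[k_{\rm tor}(Y_0):k_{\rm tor}(\Ci)]\cdot[k_{\rm tor}(\Ci):k_{\rm tor}]$ so that the first factor is absorbed by the conjugate count appearing in the Bézout step, produces the required polynomial bound of $\deg B$ purely in terms of $h(\Ci)+\deg\Ci$ and $[k_{\rm tor}(\Ci):k_{\rm tor}]$. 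Substituting this back into the Bézout inequality yields the asserted bound on $\hat h(Y_0)$, while the bound on $[k(Y_0):\mathbb{Q}]$ follows from the fact that $Y_0$ lies in $B$ and the cardinality of its Galois orbit is controlled by $\deg B$ and $[k(\Ci):k]$ via the matrix description of Remark \ref{matrici}. Finally, the count $M_r$ is obtained by enumerating $r\times N$ matrices over $\mathrm{End}(E)$ whose product of squared row-norms is bounded by the bound on $\deg B$, and $S_r\le\sum_i|\Ci\cap H_i|\le M_r\,\deg\Ci\,\max_i\deg H_i$ by the geometric Bézout inequality.

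The main obstacle is the careful exponent bookkeeping when combining the two inequalities: one has to pair the splitting of $[k_{\rm tor}(Y_0):k_{\rm tor}]$ with the conjugate-counting factor from step (a) and propagate the $\eta$-losses from the Lehmer bound through several fractional powers in order to recover exactly the displayed exponents $\frac{r}{2r-N}$, $\frac{N-r}{2r-N}$ and their variants for $M_r$, $\deg H_i$, $S_r$. A secondary subtlety is that the use of a Lehmer-type (rather than Bogomolov-type) inequality is forced because $Y_0$ is a point; since such an inequality is currently available only in the CM case, the hypothesis that $E$ has complex multiplication cannot be weakened with the present method.
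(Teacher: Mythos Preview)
Your height-bound argument is essentially the paper's: intersect $\Ci$ with the hypersurface cut by the smallest-degree row $h_1$, count Galois conjugates over $k_{\rm tor}(\Ci)$, apply Arithmetic B\'ezout, and play this off against Carrizosa's Lehmer-type bound for $Y_0$ in its minimal $B+\zeta$ of dimension $N-r$. The resulting bounds on $\deg B$ and $\hat h(Y_0)$ are derived exactly as in the paper.

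The genuine gap is your bound on $[k(Y_0):\mathbb{Q}]$, and hence on $\deg H_i$ and $S_r$. You write that the Galois orbit of $Y_0$ ``is controlled by $\deg B$ and $[k(\Ci):k]$ via the matrix description''. This does not work. First, $Y_0$ does not lie in $B$ but in the translate $B+\zeta$; the abelian subvariety $B$ is defined over $k$, but $B+\zeta$ is only defined over $k_{\rm tor}$. Second, if you try instead the algebraic subgroup $H=B+\langle\zeta\rangle$, which \emph{is} defined over $k$, then $\deg H\le(\deg B)\,\mathrm{ord}(\zeta)$, and $\mathrm{ord}(\zeta)$ is precisely the quantity you do not yet control. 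So B\'ezout with any torsion-type variety containing $Y_0$ feeds back the unknown $\mathrm{ord}(\zeta)$, and the argument is circular.

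The paper breaks this circularity with a Siegel's-Lemma construction, which you are missing. One writes the coordinates of $Y_0$ in terms of generators $g_1,\dots,g_{N-r}$ of the free part of the $\mathrm{End}(E)$-module they span plus a torsion generator $T$ of order $R$, and uses Siegel's Lemma over the CM order to produce a single linear form over $\mathrm{End}(E)$ vanishing at $Y_0$. This yields a codimension-one algebraic subgroup $G$ \emph{defined over $k$} with $\deg G$ controlled by $R^{2/r}$ and $\prod_j(\hat h(Y_0)/\hat h(g_j))^{1/r}$. Now B\'ezout for $\Ci\cap G_0$ gives $[k(Y_0):k]\ll[k(\Ci):k]\deg\Ci\,\deg G_0$. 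A second Lehmer-type input bounds $\prod_j\hat h(g_j)$ from below by $[k_{\rm tor}(Y_0):k_{\rm tor}]^{-1-\eta}$, and Serre's open-image theorem gives $[k(Y_0):k]\gg R^{2-\eta}$; combining these with the already-established bound on $[k_{\rm tor}(Y_0):k_{\rm tor}]\hat h(Y_0)$ closes the system and yields the stated bound on $[k(Y_0):\mathbb{Q}]$. Only then can one bound $\mathrm{ord}(\zeta)\ll[k(Y_0):\mathbb{Q}]^{N/2+\eta}$, hence $\deg H_i$, and finally count pairs $(B,\zeta)$ to obtain $M_r$ and $S_r$.
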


\begin{proof}

%We also use  \cite{ioannali} Proposition 4,  to bound  the degree of the field of definition  $Y_0$. 
 
We notice that the set $\Ci_{\rm Tor}$ of all torsion points in $ \Ci$ has height zero,  is finite and has cardinality effectively bounded by the quantitative Manin-Mumford Conjecture (see relation (\ref{torsione})). From now on, we will be concerned with points in $\mathcal{S}( \Ci)$ that are not torsion.

Clearly  all the points in the intersection $ \Ci\cap(\cup_{\codim H>\dim H}H) $ are   $ \Ci$-torsion anomalous. In addition since $ \Ci$ is  a weak-transverse curve each torsion anomalous point is maximal. 

Let $Y_0\in \mathcal{S}( \Ci)$ be a non-torsion point; then  $Y_0\in  \Ci\cap H$, with $H$ the minimal subgroup containing $Y_0$ with respect to the inclusion, and  $\dim H<\codim H$. This gives $N-\codim H<\codim H$, so $\frac{N}{2}<
\codim H$. Since $Y_0$ is non torsion then $\codim H<N$. Thus  $Y_0\in \mathcal{S}_r(\Ci)$ for some $\frac{N}{2} <r<N-1$ and $\codim H=r$. This shows that the decomposition of $\mathcal{S}(\Ci)$ is correct.

Let  $B+\zeta$ be a component of $H$ containing $Y_0$. Clearly $\dim B=\dim H$ and $Y_0\in  \Ci\cap (B+\zeta)$ with $B+\zeta$ minimal for $Y_0$ and the torsion point $\zeta$ in the orthogonal complement of $B$.

Notice that 
\begin{equation}\label{degH81}
\deg H\leq (\deg B) \mathrm{ord}(\zeta).
\end{equation}
In fact $B+\langle \zeta\rangle$ is an algebraic subgroup of dimension equal to $\dim H$, it contains $Y_0$ and it is contained in $H$; thus $B+\langle \zeta\rangle=H$, by the minimality of $H$.

The proof now follows the lines of the proof of Theorem \ref{codimensionerelativauno} given in \cite{TAI}. We proceed to bound $\deg B$ and, in turn, the height of $Y_0$, using the Lehmer Type Bound for CM abelian varieties  and the Arithmetic B\'ezout theorem. Then, using Siegel's lemma, we get a bound for $[k(Y_0):k]$ and for the order and the number of torsion points $\zeta$, providing also a bound for $\deg H$.

 Recall that $r=\codim B=\codim H$. 

 We first exclude the case $r=1$ and show that the case $N-r=1$ is covered by  Theorem \ref{codimensionerelativauno}. If $r=1$ then $\dim B<\codim B$ implies $\dim B=0$ and $N=\dim B+\codim B=1$ contradicting the weak-transversality of $ \Ci$.
 
The case $N-r=1$ corresponds to $\dim B=1$ and $Y_0$ of relative codimension one, that can be treated with  Theorem \ref{codimensionerelativauno}.

Thus we can assume $r>N-r\geq 2$.
Moreover $2r>N$, since by assumption $\codim B> \dim B$. 

By Remark \ref{matrici}, the variety  $\sotto+\torsione$ is a component of the zero set of  forms $h_1,\dots,h_{r}$ of increasing degrees $d_i$ and $$d_1 \cdots d_{r}\ll\deg \sotto=\deg (\sotto+\torsione)\ll d_1 \cdots d_{r}.$$
Consider the torsion variety defined as the zero set of $h_1$, and let $A_0$ be  one of its connected components containing  $B+\zeta$.

Then
\begin{equation}\label{degA0curva}
\deg A_0\ll d_1\ll(\deg B)^{\frac{1}{r}}.
\end{equation}

From the Lehmer Type Bound applied to $Y_0$ in $B+\zeta$, for every positive real $\eta$, we get
\begin{equation}\label{carrizosa81}
\hat{h}(Y_0)\gg_\eta \frac{(\deg B)^{\frac{1}{N-r}-\eta}}{[k_{\mathrm{tor}}(Y_0):k_{\mathrm{tor}}]^{\frac{1}{N-r}+\eta}}.
\end{equation}

Notice that all conjugates of $Y_0$ over $k_\mathrm{tor}( \Ci)$ are components of $ \Ci\cap A_0$. In addition all conjugates of $Y_0$ over $k_\mathrm{tor}(V)$ are in $V\cap (B+\zeta)$, so the number of components of $V\cap A_0$ of height $\hat{h}(Y_0)$ is at least $$[k_\mathrm{tor}(V,Y_0):k_\mathrm{tor}(V)]\geq \frac{[k_\mathrm{tor}(Y_0):k_\mathrm{tor}]}{[k_\mathrm{tor}(V):k_\mathrm{tor}]}.$$
Applying the Arithmetic B\'ezout theorem to  $ \Ci\cap A_0$  we have
\begin{equation}\label{ArBez81}
\frac{[k_{\mathrm{tor}}(Y_0):k_{\mathrm{tor}}]}{[k_{\mathrm{tor}}( \Ci):k_{\mathrm{tor}}]}\hat{h}(Y_0)\ll (h( \Ci)+\deg  \Ci) (\deg B)^{\frac{1}{r}}.
\end{equation}

From \eqref{carrizosa81} and \eqref{ArBez81} we get
$$
(\deg B)^{\frac{2r-N}{r(N-r)}-\eta}\ll_\eta \cC  [k_{\mathrm{tor}}(Y_0):k_{\mathrm{tor}}]^{\frac{1}{N-r}-1+\eta}.
$$
Since $2r>N$, $N-r>1$ and $ [k_{\mathrm{tor}}(Y_0):k_{\mathrm{tor}}]\geq 1$, for $\eta$ small enough we obtain
\begin{equation}\label{degB81}
 \deg B\ll_\eta {(\cC)}^{\frac{r(N-r)}{2r-N}+\eta}.
\end{equation}
So, from \eqref{ArBez81} we have
\begin{equation}
\label{hY081} [k_{\mathrm{tor}}(Y_0):k_{\mathrm{tor}}]\hat{h}(Y_0)\ll_\eta {(\cC)}^{\frac{r}{2r-N}+\eta};
\end{equation}
while, using the right-hand side of \eqref{ArBez81} as a bound for $\hat{h}(Y_0)$ and \eqref{degB81} we get $$
\hat{h}(Y_0)\ll_\eta (h( \Ci)+\deg  \Ci)^{\frac{r}{2r-N}+\eta}[k_{\mathrm{tor}}( \Ci):k_{\mathrm{tor}}]^{\frac{(N-r)}{2r-N}+\eta}$$
as required.

Having bounded $\deg B$ and $\hat{h}(Y_0)$, we now proceed to bound $[k(Y_0):k]$ only in terms of $ \Ci$ and $E^N$.

We use Siegel's Lemma to construct an algebraic subgroup $G$ of codimension  $1=\dim  \Ci$ defined over $k$, containing $Y_0$ and of controlled degree.  The construction is exactly as in \cite[Propositions 3 and 4]{ioannali}. We present the steps of the proof. 
 
 We know that $\mathrm{End}(E)$ is an order  in an imaginary quadratic field $L$ with ring of integers $\mathcal{O}$. 
By minimality of $B+\zeta$, the coordinates of  $Y_0=(x_1,\ldots,x_N)$ generate an $\mathcal{O}$-module $\Gamma$ of rank equal to the dimension of $B+\zeta$ which is $N-r$. Let $g_1, \dots, g_{N-r}$ be generators of the free part of $\Gamma$  which give the successive minima, and are chosen as in \cite[Proposition 2]{ioannali}. Then  $\hat{h}(\sum_j \alpha_j g_j)\gg \sum_i |N_L(\alpha_j)|\hat{h}(g_j)$ for coefficients $\alpha_j$ in $\mathcal{O}$.  In addition, like in the case of relative codimension one,  the torsion part is generated by a torsion point $T$ of exact order $R$.

Therefore  we can write
$$x_i=\sum_j\alpha_{i,j} g_j +\beta_i T$$
 with coefficients $\alpha_i,\beta_i\in\mathcal{O}$ and $$N_L(\beta_i)\ll R^2.$$
 
 As in   \cite[Proposition 2]{ioannali}, we have
\begin{equation}\label{aicurva}
\hat{h}(x_i)\gg \sum_j \left|N_L(\alpha_{i,j})\right|\hat{h}(g_j).
\end{equation}

We define
$$
\nu_j=(\alpha_{1,j}, \dots, \alpha_{N,j}) \quad{\mathrm{and}} \quad  |\nu_j|=\max_i|N_L(\alpha_{i,j})|.$$
Then 
\begin{equation}\label{nucurva}
|\nu_j|\ll \frac{\hat{h}(Y_0)}{\hat{h}(g_j)}.\end{equation}

\medskip

We want to find coefficients $a_i\in\mathcal{O}$ such that $\sum_i^N a_i x_i=0$.
This gives a linear system of $N-r+1$ equations, obtained equating to zero the coefficients of $g_j$ and of $T$. The system has coefficients in $\mathcal{O}$ and $N+1$ unknowns: the $a_i$'s and one more unknown for the congruence relation arising from the torsion point.

We use the Siegel's Lemma over $\mathcal{O}$ as stated in \cite[Section 2.9]{BiGi}.  We get one equation with coefficients in $\mathcal{O}$;  multiplying it by a constant depending only on $E$ we may assume that it has coefficients in $\mathrm{End}(E)$. Thus it defines the sought-for algebraic subgroup $G$ of degree
$$\deg G\ll \left((\max_i N_L(\beta_i))\left(\prod^{N-r}_j|\nu_j|\right) \right)^{\frac{1}{r}}.$$
Let $G_0$ be  a $k$-irreducible component of $G$ passing through $Y_0$.
Then $$\deg G_0\ll \left(R^2\prod^{N-r}_j|\nu_j| \right)^\frac{1}{r}.$$

Since $ \Ci$ is weak-transverse, the point  $Y_0$ is a component of $ \Ci\cap G_0$.   In addition $ \Ci$ and $G_0$ are defined over $k$ and  B\'ezout's theorem gives  
\[[k(Y_0):k]\leq [k( \Ci):k]\deg  \Ci\deg G_0.\]
Hence
\[
[k(Y_0):k]\ll [k( \Ci):k]\deg  \Ci \left(R^2\prod^{N-r}_j|\nu_j| \right)^\frac{1}{r}.
\]
Using \eqref{nucurva} we get
\begin{equation}\label{degY081}
[k(Y_0):k]\ll 
[k( \Ci):k]\deg  \Ci \left(R^2\frac{\hat{h}(Y_0)^{N-r}}{\prod^{N-r}_j\hat{h}({g_j})}\right)^\frac{1}{r}.
\end{equation}

Following step by step the proof of Proposition 4 in \cite{ioannali}, using the Lehmer Type Bound for CM abelian variety in place of \cite[Theorem 1.3]{davidhindry} , we get
\begin{equation}\label{prod81}
\prod_{i=1}^{N-r}\hat{h}(g_i)\gg_\eta \frac{1}{[k_{\mathrm{tor}}(Y_0):k_{\mathrm{tor}}]^{1+\eta}}.
\end{equation}

By a result of Serre, recalled also in \cite[Corollary 3]{ioannali},  we know $[k(Y_0):k]\gg_\eta R^{2-\eta}$. Moreover from \eqref{hY081}, the product $\hat{h}(Y_0)[k_{\mathrm{tor}}(Y_0):k_{\mathrm{tor}}]$ is bounded. 
Substituting these bounds in \eqref{degY081} and recalling that $r>2$, for $\eta$ small enough we obtain
\begin{equation}\label{ultimoY0}
[k(Y_0):k]\ll_\eta ([k( \Ci):k]\deg  \Ci)^{\frac{r}{r-1}+\eta}{(\cC)}^{\frac{r(N-r)}{(2r-N)(r-1)}+\eta}.
\end{equation}

Moreover, as in the proof of \cite[Theorem 6.1]{TAI} , we can choose  $\zeta$ so that $$[k(\zeta):k]\ll [k(Y_0):\qe], $$ and 
\begin{equation} \label{ordzeta}\mathrm{ord}(\zeta)\ll_\eta [k(Y_0):\mathbb{Q}]^{\frac{N}{2}+\eta}. \end{equation}

Substituting \eqref{ordzeta} and \eqref{degB81} in \eqref{degH81}, we get the bound for $\deg H$.

We now prove the bound on $S_r$. 
By Remark \ref{matrici} and Minkowski's Theorem, we see that  the number of  abelian subvarieties  $G$ in $E^N$ of codimension $r$ and degree at most $\deg B$ is $\ll_\eta({\deg B})^{N}$.  In fact, if $G$ is such an abelian subvariety, by Minkowski's theorem there exists a linear trasformation of absolutely bounded degree that, up to reordering of the columns, transforms the matrix of $G$ in a matrix of the form 
\begin{equation*}
\phi= 
 \left(\begin{array}{cccccc}
d_1&\dots &0&*&\dots &*\\
\vdots & \ddots& \vdots & \vdots& *& \vdots\\
0&\dots &d_r&*&\dots &*
\end{array}\right),
\end{equation*} 
with  $|d_i|$  the maximum of the $i$-row and $\prod_i d_i\ll \deg B$. We now count the number of such matrices.
We have $\left(\prod_i 2|d_i|\right)^{N-r}\ll (\deg B)^{N-r}$ possibilities 
for $*$ and at most
$2\deg B$ possibilities for each $d_i$. Thus the number of such matrices is $\ll (\deg B)^{N}$. So   the number $D$ of  abelian subvarieties $G$ of degree at most $\deg B$ is  $\ll (\deg B)^{N}$.

As for the point $\zeta$, it is well known that the number of torsion points in $E^N$ of order bounded by a constant $T$ is at most $T^{2N+1}$. In fact the number of points of order dividing a positive integer $i$ is $i^{2N}$; so a bound for the number of torsion points of order at most $T$  is given by $$\sum_{i=1}^T i^{2N}\ll T^{2N+1}.$$

Applying B\'ezout's theorem to every intersection $V\cap (B+\zeta)$, we obtain that  the number $S_r$  is bounded by  $$S_r\ll{\deg  \Ci} \phantom{.}({\deg B})^{N+1} {\mathrm{ord}(\zeta)}^{2N+1}$$ and combining this with \eqref{ordzeta} and \eqref{degB81} we obtain the desired bound for $S_r$.

 Finally,  we notice that the algebraic  subgroups $H$  of codimension $r$ can be taken in a finite set $\{H_1,\dotsc, H_{M_r}\}$ where the $H_i=G_i+E^N[{\rm{ord}}(\zeta)]$ with $G_i$  abelian varieties of degree $\le \deg B$  and  codimension $r$. A bound for $M_r$ can be given effectively as done above for $D$, obtaining $M_r\ll ({\deg B})^N$.  

\end{proof}

\subsection{Consequences of Theorem \ref{curva2} on the effective MLC}
In this section we use Theorem \ref{curva2} to obtain effective height bounds for the intersection of $ \Ci$ with a group $\Gamma$, such that $\overline{\Gamma}$ has  rank larger than $1$. The following corollary extends Theorem \ref{ML1} and it is proven in \cite[Corollary 1.6]{Pacific}  with $l=1$, $A_1=E$, $e_1=N$, $g_1=1$ and $t_1=t<N/2$.   
\begin{thm}
\label{MLR}
Let $ \Ci$ be a weak-transverse curve in  $E^N$ with $E$ an elliptic curve with CM.
Let $k$ be a field of definition for $E$. Let $\Gamma$ be a subgroup of $E^N$ such that $\overline{\Gamma}$ has   rank $ t<  N/2$. Then, for any positive $\eta$, there exists a  constant $c_3$ depending only on $E^N$ and $\eta$, such that  the set $$ \Ci\cap \Gamma $$ has  N\'eron-Tate height bounded  as
$$\hat{h}( \Ci\cap \Gamma)  \le c_3(h( \Ci)+\deg  \Ci)^{\frac{N-t}{N-2t}+\eta}[k_\mathrm{tor}( \Ci):k_\mathrm{tor}]^{\frac{t}{N-2t}+\eta}.$$  
\end{thm}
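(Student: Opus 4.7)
The plan is to reduce $\Ci \cap \Gamma$ to a subset of the torsion-anomalous locus $\mathcal{S}(\Ci)$ controlled by Theorem \ref{curva2}, and then optimize the height bound there over the admissible codimension. First, I would fix a non-torsion point $P = (P_1, \dots, P_N) \in \Ci \cap \Gamma$; its coordinates $P_i$ lie in $\overline{\Gamma}$, so they generate a sub-$\mathrm{End}(E)$-module of rank $s \le t$. By Remark \ref{matrici}, the relations between the $P_i$ over $\mathrm{End}(E)$ cut out an abelian subvariety $H$ of $E^N$ containing $P$, with $\dim H = s \le t$. Since $t < N/2$, we have $\codim H \ge N - t > t \ge \dim H$, so $P$ belongs to $\cup_{\codim H > \dim H} H$, i.e.\ $P \in \mathcal{S}(\Ci)$; more precisely, $P \in \mathcal{S}_r(\Ci)$ for some integer $r$ with $N - t \le r \le N - 1$.

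Next, Theorem \ref{curva2} yields, for every $\eta > 0$,
\begin{equation*}
\hat{h}(P) \ll_\eta (h(\Ci)+\deg \Ci)^{\frac{r}{2r-N}+\eta}[k_\mathrm{tor}(\Ci):k_\mathrm{tor}]^{\frac{N-r}{2r-N}+\eta}.
\end{equation*}
A direct differentiation shows that both rational functions $r \mapsto \frac{r}{2r-N}$ and $r \mapsto \frac{N-r}{2r-N}$ are strictly decreasing on $(N/2,+\infty)$: their derivatives are both equal to $\frac{-N}{(2r-N)^2}$. Hence both exponents are maximized at the smallest admissible value $r = N - t$, where they become $\frac{N-t}{N-2t}$ and $\frac{t}{N-2t}$, respectively. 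Taking the supremum over the finitely many eligible values of $r$ and absorbing the resulting multiplicative constant into $c_3$ gives exactly the bound in the statement.

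Finally, the torsion points in $\Ci \cap \Gamma$ have zero N\'eron-Tate height and are bounded in number by the quantitative Manin-Mumford statement, so they are harmless. I expect the first step to be the main technical pivot: it converts the rank-$t$ hypothesis on $\overline{\Gamma}$ as an $\mathrm{End}(E)$-module into membership of $P$ in an algebraic subgroup of $E^N$ of dimension $\le t$. This conversion relies crucially on the CM hypothesis, which makes $\mathrm{End}(E)$ an order in an imaginary quadratic field and allows the matrix description of abelian subvarieties recalled in Remark \ref{matrici}. Once this step is in place the remaining argument is a routine optimization over $r$, and no further Diophantine input beyond Theorem \ref{curva2} is required.
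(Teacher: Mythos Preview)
Your argument is correct and is exactly the intended deduction from Theorem~\ref{curva2}. The paper itself does not give a self-contained proof of this statement but cites \cite[Corollary~1.6]{Pacific}; however, your reduction is precisely the one the paper uses a few lines later for the parallel cardinality statement in Theorem~\ref{teoremoneML}(iii), and the monotonicity of the two exponents in $r$ gives the claimed bound at $r=N-t$.

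Two small remarks. First, the relations among the $P_i$ over $\mathrm{End}(E)$ in general only force $P$ into a \emph{torsion translate} $B+\zeta$ of an abelian subvariety of dimension $s$, not into $B$ itself; but $B+\langle\zeta\rangle$ is an algebraic subgroup of the same dimension, so $P\in\mathcal{S}(\Ci)$ as you claim. Second, the conversion step does not actually rely on CM: even when $\mathrm{End}(E)=\mathbb{Z}$, a rank-$s$ relation module puts $P$ in an algebraic subgroup of dimension $s$. The CM hypothesis enters only through Theorem~\ref{curva2}, where the Lehmer type bound of Carrizosa is used.
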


We notice that Theorems \ref{ML1} and \ref{MLR} are proved for weak-transverse curves. If we assume the transversality of $ \Ci$ we can relax the hypothesis on the rank of $\overline\Gamma$.

\begin{thm}\label{MLtre}
Let $ \Ci$ be a transverse curve in  $E^N$ with $E$ a CM  elliptic curve  defined over $k$. Let $\Gamma$ be a subgroup of $E^N$ such that the free part of the group of its coordinates  is an ${\mathrm{End}}(E)$-module of   rank $t\le N-1$, generated by $g_1, \dots, g_t$. Then, for any positive $\eta$ there exists a constant $c_4$ depending only on $E^N$ and $\eta$, such that the set $$ \Ci\cap \Gamma $$ has  N\'eron-Tate height bounded  as
$$\hat{h}( \Ci\cap \Gamma)  \le c_4 [k_\mathrm{tor}( \Ci\times g):k_\mathrm{tor}]^{\frac{t}{N-t}+\eta}(h( \Ci)+ (\hat{h}(g)+1)\deg  \Ci)^{\frac{N}{N-t}+\eta}$$ 
where $g=(g_1, \dots, g_t)$.
\end{thm}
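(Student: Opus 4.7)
The plan is to reduce Theorem \ref{MLtre} to an application of Theorem \ref{curva2} on an auxiliary curve in $E^{N+t}$, using the standard device of encoding the free generators of $\overline{\Gamma}$ as a fixed point. I set $g=(g_1,\dots,g_t)\in E^t$ and form
$$\Ci'=\Ci\times\{g\}\subset E^{N+t}.$$
The first task is to verify that $\Ci'$ is weak-transverse in $E^{N+t}$. If $\Ci'$ lay in a proper torsion variety cut out by a relation $\sum_j a_j x_j+\sum_i b_i y_i\equiv\mathrm{const}\pmod{\mathrm{Tor}}$, then projecting onto the first $N$ factors would force $\sum_j a_j x_j$ to be constant modulo torsion on $\Ci$, contradicting the transversality of $\Ci$ in $E^N$ unless every $a_j$ vanishes; the surviving relation $\sum_i b_i g_i\equiv\mathrm{const}$ would then contradict the $\mathrm{End}(E)$-linear independence modulo torsion of $g_1,\dots,g_t$, which is built into the rank $t$ hypothesis on $\overline{\Gamma}$.

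For any $P=(P_1,\dots,P_N)\in \Ci\cap\Gamma$ I would decompose each coordinate as $P_j=\sum_i\alpha_{ji}g_i+\tau_j$ with $\alpha_{ji}\in\mathrm{End}(E)$ and $\tau_j\in\mathrm{Tor}_E$. These $N$ relations cut out a torsion translate of codimension exactly $N$ through $(P,g)$, and by the same independence of the $g_i$'s every relation on $(P,g)$ is a consequence of them, so the minimal torsion variety containing $(P,g)$ has codimension exactly $N$. In the notation of Theorem \ref{curva2} (now with ambient $E^{N+t}$), this says $(P,g)\in\mathcal{S}_N(\Ci')$. Since $t\le N-1$ forces $2N>N+t$, the value $r=N$ lies in the admissible range $\lfloor(N+t)/2+1\rfloor\le r\le N+t-1$. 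Feeding $r=N$ into Theorem \ref{curva2} yields
$$\hat h(P,g)\ll_\eta \bigl(h(\Ci')+\deg\Ci'\bigr)^{\frac{N}{N-t}+\eta}[k_{\mathrm{tor}}(\Ci'):k_{\mathrm{tor}}]^{\frac{t}{N-t}+\eta}.$$

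Since $k_{\mathrm{tor}}(\Ci')=k_{\mathrm{tor}}(\Ci\times g)$ and $\hat h(P)=\hat h(P,g)-\hat h(g)\le \hat h(P,g)$, it remains to compare $h(\Ci')$ and $\deg \Ci'$ with $h(\Ci),\deg\Ci,\hat h(g)$. By the standard behaviour of Chow forms under the product embedding one has $\deg \Ci'\ll\deg\Ci$ and $h(\Ci')\ll h(\Ci)+\hat h(g)\deg\Ci$, so $h(\Ci')+\deg\Ci'\ll h(\Ci)+(\hat h(g)+1)\deg\Ci$, and substitution gives the claimed inequality.

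The key obstacle is pinning the codimension of the minimal torsion variety through $(P,g)$ exactly at $N$: the exponent function $r\mapsto r/(2r-(N+t))$ appearing in Theorem \ref{curva2} is strictly decreasing, so any collapse to a smaller minimal codimension would worsen the exponent of $h(\Ci)+\deg\Ci$ beyond $N/(N-t)$ and break the target bound. The $\mathrm{End}(E)$-linear independence of $g_1,\dots,g_t$ modulo torsion is precisely the ingredient that excludes this collapse, so the argument depends tightly on the rank hypothesis $t\le N-1$. The remaining checks, namely the weak-transversality of $\Ci'$ and the multiprojective height and degree estimates for $\Ci\times\{g\}$, are classical but must be written out carefully to justify the constants absorbed in $\ll_\eta$.
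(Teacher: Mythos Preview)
Your approach is correct and coincides with the paper's: embed $\Ci\cap\Gamma$ into $\Ci'=\Ci\times g\subset E^{N+t}$, note that each $(P,g)$ lies in an algebraic subgroup of codimension $N$ and dimension $t$ (so $\codim>\dim$ since $t\le N-1$), apply Theorem~\ref{curva2} with $r=N$, and then pass back via $\deg\Ci'=\deg\Ci$ and $h(\Ci')\le 2(h(\Ci)+\hat h(g)\deg\Ci)$ (the paper obtains the latter from Zhang's inequality). Your ``key obstacle'' is in fact harmless: the $N$ relations $x_j-\sum_i\alpha_{ji}y_i=\tau_j$ are visibly independent (each involves a distinct $x_j$), so the minimal codimension is automatically $\ge N$, and since the exponent $r\mapsto r/(2r-(N+t))$ is decreasing the bound for $r=N$ already dominates all larger $r$; the paper accordingly does not isolate this issue.
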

\begin{proof}
 Consider the curve $ \Ci'= \Ci\times g$ in $E^{N+t}$. Since $ \Ci$ is transverse then $ \Ci'$ is weak-transverse. If a point $(x_1, \dots, x_N)$ is in $\Gamma$, then there exist
 $0\neq a_i\in{\mathrm{End}}(E)$, an $N\times t$ matrix $B$ with coefficients in  ${\mathrm{End}}(E)$ and a torsion point $\zeta \in E^N$ such that 
 $$(a_1x_1,\ldots,a_Nx_N)^{t}=B (g_1,\ldots,g_t)^{t}+\zeta.$$ Thus the point $(x_1,\dots x_N,g_1, \dots, g_t) $ belongs to the intersection $ \Ci'\cap H$ with $H$ the torsion variety of codimension $N$  and dimension $t$ in $E^{N+t}$ defined by the equations 
 $$(a_1x_1,\ldots,a_Nx_N)^{t}=B (y_{N+1},\cdots, y_{N+t})^{t}+\zeta.$$

 Thus, $ \Ci\cap\Gamma$ is embedded in $ \Ci'\cap \cup_{\dim H=t}H$ and  $\hat{h}( \Ci\cap \Gamma)\leq \hat{h}( \Ci'\cap \cup_{\dim H=t}H)$ for $H$ ranging over all algebraic subgroups of dimension $t$. If $N> t$ then $\codim H>\dim H$.

The bound for the height is then given by Theorem \ref{curva2} applied to $ \Ci'\subseteq E^{N+t}$, where $\deg  \Ci=\deg  \Ci'$ and $h( \Ci')\leq 2( h( \Ci)+\hat{h}(g)\deg  \Ci)$
by Zhang's inequality. 
\end{proof}

\subsection{Consequences of Theorem \ref{curva2} on the quantitative MLC} \label{QML}
In  \cite[Theorem 1.2 ]{remond},  G. R\'emond gives  a bound on the cardinality of the intersection $ \Ci\cap \Gamma$ for a transverse curve in $E^ N$ and $\Gamma$ a  $\mathbb{Z}$-module of rank $r$. He obtains the following bound $$\sharp( \Ci\cap \Gamma)\leq {c(E^N, \mathcal{L})}^{r +1}(\deg  \Ci)^{(r +1) N^{20}},$$ where $c(E^N, \mathcal{L})$ is a positive effective constant depending on $E^N$ and on the choice of an invertible, symmetric and ample sheaf $\mathcal{L}$ on $E^N$.

The Manin-Mumford Conjecture is a special case of the Mordell-Lang Conjecture.  Explicit bounds on the number of torsion points in $ \Ci$ are given, for instance,  by E. Hrushovski in \cite{hru} and by 
 S. David and P. Philippon  in \cite[Proposition 1.12]{sipaIMRP}. There they show that the number of torsion points  on a non-torsion curve $ \Ci$ is at most \begin{equation}\label{torsione}\sharp( \Ci\cap\mathrm{Tor}_{E^N})\le (10^{2N+53}\deg  \Ci)^{33},\end{equation}
where $\mathrm{Tor}_{E^N}$ is the set of all torsion points of $E^N$.

As another consequence of Theorem \ref{curva2}  we also get   a sharp bound for the number of non torsion points in $ \Ci\cap \Gamma$ for $ \Ci$ weak-transverse in $E^N$, which, together with the just mentioned bounds for the torsion, improves in some cases  the bounds of G. R\'emond. Notice that below we use the rank $t$  of $\overline{\Gamma}$, the $\mathrm{End}(E)$-module of the coordinates of $\Gamma$. To compare  with R\'emond's result, we can use the trivial relation $r<2Nt$ and $t<Nr$.

\begin{thm}\label{teoremoneML}
Let $ \Ci$ be a curve in $E^N$, where $E$ has CM  and is defined over a number field $k$. Let $\Gamma$ be a subgroup of $E^N$ such that the group $\overline\Gamma$ has rank $t$ as an ${\rm End}(E)$-module. Let $\sharp(\Ci\cap \Gamma_{\setminus {\rm Tor}})$ be the number of non-torsion points in the intersection $ \Ci\cap \Gamma$. Then, for every positive real $\eta$ there exist  constants $d_1, d_2,d_3, d_4$ depending only on $E^N$ and $\eta$, such that:
\begin{enumerate}
\item[(i)]\label{ml1} if $ \Ci$ is weak-transverse in $E^N$, $N>2$ and $t=1$, we have \begin{align*}\sharp(\Ci\cap \Gamma_{\setminus {\rm Tor}})\leq & d_1  {(\cC)}^{\frac{(N-1)(4N^2-N-4)}{2(N-2)^2}+\eta}\\
&\cdot (\deg  \Ci)^{\frac{2N^3-N^2+N-4}{2(N-2)}+\eta} [k( \Ci):k]^{\frac{N(N-1)(2N+1)}{2(N-2)}+\eta};
\end{align*}
\item[(ii)]\label{ml2} if $ \Ci$ is transverse in $E^2$ and $t=1$, we have \begin{align*}\sharp(\Ci\cap \Gamma_{\setminus {\rm Tor}})\leq &d_2 {([k_\mathrm{tor}( \Ci\times g):k_\mathrm{tor}](h( \Ci)+(\hat{h}(g)+1)\deg  \Ci))}^{29+\eta}\\
&\cdot (\deg  \Ci)^{22+\eta}[k( \Ci\times g):k]^{21+\eta}
\end{align*}
where $g$ is a generator of $\overline\Gamma$;
\item[(iii)]\label{ml3} if $ \Ci$ is weak-transverse in $E^N$ and $t<N/2$, we have \begin{align*}\sharp(\Ci\cap \Gamma_{\setminus {\rm Tor}})\leq & d_3 {(\cC)}^{\frac{t(N-t)(4N^2-2Nt+N-2t-2)}{2(N-2t)(N-t-1)}+\eta}\\
&\cdot (\deg  \Ci)^{1+\frac{N(2N+1)(N-t)}{2(N-t-1)}+\eta}[k( \Ci):k]^{\frac{N(2N+1)(N-t)}{2(N-t-1)}+\eta};
\end{align*}
\item[(iv)]\label{ml4} if $ \Ci$ is transverse in $E^N$ and $t\leq N-1$, we have 
\begin{align*} \sharp(\Ci\cap \Gamma_{\setminus {\rm Tor}})\leq & d_4 
(\deg  \Ci)^{1+\frac{(N+t)N(2N+2t+1)}{2(N-1)}+\eta}[k( \Ci\times g):k]^{\frac{(N+t)N(2N+2t+1)}{2(N-1)}+\eta}\\
&\cdot  {([k_\mathrm{tor}( \Ci\times g):k_\mathrm{tor}](h( \Ci)+(\hat{h}(g)+1)\deg  \Ci))}^{\frac{Nt(4N^2+2t^2+6Nt+N-t-2)}{2(N-t)(N-1)}+\eta},
\end{align*}
where $\overline\Gamma$ is generated by $g_1,\ldots,g_t$ and $g=(g_1,\ldots,g_t)$.
\end{enumerate}
\end{thm}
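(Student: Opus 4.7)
The plan is to reduce each of the four cases to a single application of the cardinality bound $S_r$ from Theorem \ref{curva2}, using the embedding trick of the proof of Theorem \ref{MLtre} for the transverse situations (ii) and (iv). To start, any non-torsion point $P = (x_1,\dots,x_N) \in \Ci\cap\Gamma$ is $\Ci$-torsion anomalous of controlled codimension: since all coordinates lie in $\overline{\Gamma}$, there exist $\alpha_{ij}\in\mathrm{End}(E)$ and torsion $\tau_i$ with $x_i = \sum_j \alpha_{ij} g_j + \tau_i$, so $P$ belongs, up to a torsion translate, to the image in $E^N$ of the morphism $E^t\to E^N$ represented by the matrix $(\alpha_{ij})$, which is an algebraic subgroup $H$ of dimension at most $t$ and hence of codimension at least $N-t$.

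For cases (i) and (iii), where $\Ci$ is weak-transverse and $t < N/2$, the inequality $\codim H \geq N-t > N/2 \geq \dim H$ places each non-torsion $P$ in some $\mathcal{S}_r(\Ci)$ with $N-t \leq r \leq N-1$. Since both exponents
\[
c_1(r) = \frac{rN(2N+1)}{2(r-1)}, \qquad c_2(r) = \frac{r(N-r)(2rN+2r-2+2N^2-N)}{2(2r-N)(r-1)}
\]
are decreasing in $r$ on this range, the dominant contribution to $\sum_{r=N-t}^{N-1} S_r$ comes from $r = N-t$; substituting this value into Theorem \ref{curva2} yields the exponents in (iii), and specialising to $t=1$ gives (i).

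For cases (ii) and (iv), where $\Ci$ is only transverse, we follow the proof of Theorem \ref{MLtre} and form the auxiliary weak-transverse curve $\Ci' = \Ci\times g \subseteq E^{N+t}$ with $g = (g_1,\dots,g_t)$. Any non-torsion $P \in \Ci\cap\Gamma$ lifts to $P' = (P,g)$, and the defining relations force $P'$ to lie in a translate of an algebraic subgroup of $E^{N+t}$ of dimension exactly $t$, because the projection onto the last $t$ coordinates already spans $E^t$ by the rank assumption on $\overline{\Gamma}$. Hence $P' \in \mathcal{S}_N(\Ci')$ in the ambient $E^{N+t}$, and the hypothesis $r > (N+t)/2$ of Theorem \ref{curva2} reads $N > t$, which holds by $t \leq N-1$. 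Applying Theorem \ref{curva2} with ambient dimension $N+t$ and $r=N$, and expressing everything in terms of $\Ci$ via $\deg \Ci' = \deg \Ci$ and the Zhang inequality $h(\Ci') \leq 2(h(\Ci) + \hat h(g)\deg \Ci)$, produces the bounds in (iv); the specialisation $N=2$, $t=1$ gives (ii).

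The main obstacle is the bookkeeping: one must verify that the substitutions $r = N-t$ (respectively ambient $N+t$ and $r=N$) into $c_1$ and $c_2$, together with the translation of $(h(\Ci')+\deg\Ci')[k_\mathrm{tor}(\Ci'):k_\mathrm{tor}]$ into the quantity appearing on the right-hand side of the statement, produce exactly the displayed exponents. This is a mechanical but lengthy computation, and is carried out on single cases in the analogous proofs of Theorems \ref{ML1}, \ref{MLR} and \ref{MLtre}.
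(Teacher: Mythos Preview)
Your approach is essentially the one in the paper: parts (iii) and (iv) are obtained from the cardinality bound $S_r$ of Theorem \ref{curva2}, the latter after the embedding $\Ci\mapsto\Ci\times g\subset E^{N+t}$ exactly as in Theorem \ref{MLtre}; the paper simply quotes \cite[Theorem 6.2]{TAI} for (i) and (ii) rather than rederiving them as specialisations.

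Two small remarks. First, the paper only records the inclusion $(\Ci\cap\Gamma_{\setminus{\rm Tor}})\subset\bigcup_r\mathcal{S}_r$ over the full range of $r$ allowed by Theorem \ref{curva2}, whereas you sharpen the range to $N-t\le r\le N-1$ (resp.\ $r=N$ in ambient $E^{N+t}$) using the rank of the coordinate module; this is correct and in fact needed to see why the displayed exponents are exactly those of $r=N-t$ (resp.\ $r=N$). Second, your monotonicity claim for $c_1(r)$ is immediate, but for $c_2(r)$ it deserves a line of justification; in any case the paper never states this explicitly either, so the computation you defer at the end is genuinely the remaining work in both proofs.
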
 
\begin{proof}
Part (i) and (ii)  are proved in \cite[Theorem 6.2]{TAI}. To prove Part (iii) we remark that  $0 < t <\frac{N}{2}$ thus the codimension $r$ of a subgroup of minimal dimension containing  a point in  $(\Ci\cap \Gamma_{\setminus {\rm Tor}})$ satisfies $\frac{N}{2} < r<N$. This shows that $(\Ci\cap \Gamma_{\setminus {\rm Tor}})\subset \bigcup_{r=\lfloor \frac{N}{2}+1\rfloor}^{N-1}\mathcal{S}_{r}( \Ci)$. Then we use  the bound in  Theorem \ref{curva2} for $S_{r}$ which is the cardinality of  $\mathcal{S}_r( \Ci)$. To prove Part (iv)  we embed $\Ci$ in the weak-transverse curve $\Ci\times g \subset E^{N+t}$. Like above, we remark that  $(\Ci\cap \Gamma_{\setminus {\rm Tor}})$ is embedded in $ \bigcup_{r=N}^{N+t-1}\mathcal{S}_{r}( \Ci\times g)$ and $N>\frac{N+t}{2}$, because $t<N$. Then we use  the bound in  Theorem \ref{curva2} for $S_{r}$ which is the cardinality of $ \mathcal{S}_r( \Ci\times g)$.
\end{proof}

\section*{Acknowledgments} I kindly thank the Referee for his nice report.  I  thank Sara Checcoli and Francesco Veneziano for helping me with several computations and  for  reading this work. I  thank Patrik Hubschmid for a careful revision of this paper. I kindly thank the organisers Andrea Bandini and Ilaria Del Corso and the Scientific Committee of the Third Italian Number Theory Meeting held in Pisa 2015, for inviting me to give a talk. I  thank the FNS for the financial support.

\def\cprime{$'$}
\providecommand{\bysame}{\leavevmode\hbox to3em{\hrulefill}\thinspace}
\providecommand{\MR}{\relax\ifhmode\unskip\space\fi MR }
% \MRhref is called by the amsart/book/proc definition of \MR.
\providecommand{\MRhref}[2]{%
  \href{http://www.ams.org/mathscinet-getitem?mr=#1}{#2}
}
\providecommand{\href}[2]{#2}

Evelina Viada:\\
%Mathematisches Institut, 
%Georg-August Universit\"{a}t G\"{o}ttingen,
%Bunsenstra\ss e 3-5,
%D-37073 G\"{o}ttingen, Germany.
%email: viada@uni-math.gwdg.de
Department of mathematics \\
ETH Zurich\\
R\"amistrasse 101\\
CH-8092 Zurich, Swithzerland.\\
email: evelina.viada@math.ethz.ch

\end{document}